\newcommand{\tx}[1]{\mathrm{#1}}
\DeclareMathOperator{\rank}{\tx{rank}}
\DeclareMathOperator{\spn}{\tx{span}}
\DeclareMathOperator{\ind}{\tx{ind}}
\renewcommand{\H}{\mathbf{H}}
\renewcommand{\L}{\mathcal{L}}
\newcommand{\J}{\Omega}
\newcommand{\R}{\mathbb{R}}
\newcommand{\N}{\mathbb{N}}
\newcommand{\eps}{\varepsilon}
\newcommand{\ve}{\mathcal{V}}
\newcommand{\HH}{\mathcal{H}}
\newtheorem{mainthm}{Theorem} 
\newtheorem{thm}{Theorem}[section]
\newtheorem{lem}[thm]{Lemma}
\newtheorem{cor}[thm]{Corollary}
\newtheorem{prop}[thm]{Proposition}
\theoremstyle{definition}
\newtheorem{definition}[thm]{Definition}
\theoremstyle{remark}
\newtheorem{rem}{Remark}
\title{On conjugate times of LQ optimal control problems}
\date{\today}
\author[1,2]{Andrei Agrachev}
\author[3]{Luca Rizzi}
\author[1]{Pavel Silveira}
\affil[1]{\small SISSA, Trieste, Italy}
\affil[2]{\small Steklov Mathematical Institute, Moscow, Russia}
\affil[3]{\small CNRS, CMAP \'Ecole Polytechnique and \'Equipe INRIA GECO Saclay \^Ile-de-France, Paris, France}
\begin{document}

\maketitle

\begin{abstract}
Motivated by the study of linear quadratic optimal control problems, we consider a dynamical system with a constant, quadratic Hamiltonian, and we characterize the number of conjugate times in terms of the spectrum of the Hamiltonian vector field $\vec{H}$. We prove the following dichotomy: the number of conjugate times is identically zero or grows to infinity. The latter case occurs if and only if $\vec{H}$ has at least one Jordan block of odd dimension corresponding to a purely imaginary eigenvalue. As a byproduct, we obtain bounds from below on the number of conjugate times contained in an interval in terms of the spectrum of $\vec{H}$.
\end{abstract}

\tableofcontents

\section{Introduction}\label{s:intro}

Linear Quadratic optimal control problems (LQ in the following) are a standard topic in control theory and dynamical systems, and are very popular in applications. They consist in a linear control system with quadratic Lagrangian. We briefly recall the general features of a LQ problem, and we refer to \cite[Chapter 16]{AAAbook} and \cite[Chapter 7]{MR1425878} for further details. We are interested in \emph{admissible trajectories}, namely  curves $x:[0,t_1]\to \mathbb{R}^n$ such that there exists a control $u \in L^2([0,t_1],\mathbb{R}^k)$ such that
\begin{equation}
\dot{x} = Ax +Bu, \qquad x(0) = x_0, \qquad x(t_1) = x_1,\qquad x_0,x_1,t_1 \text{ fixed},
\end{equation}
that minimize a quadratic functional $\phi_{t_1}: L^2([0,t_1],\mathbb{R}^k) \to \mathbb{R}$ of the form
\begin{equation}
\phi_{t_1}(u) = \frac{1}{2}\int_0^{t_1} \left(u^*Ru + x^*P u + x^* Q x \right) dt.
\end{equation}
The condition $R\geq 0$ is necessary for existence of optimal control. We also assume $R>0$ (for the singular case we refer to \cite[Chapter 9]{MR1425878}). Without loss of generality we may reduce to the case

\begin{equation}
\phi_{t_1}(u) = \frac{1}{2}\int_{0}^{t_1} \left(u^* u - x^*Qx \right)dt.
\end{equation}
Here $A,B,Q$ are constant matrices of the appropriate dimension. The vector $Ax$ represents the \emph{drift} field, while the columns of $B$ represent the controllable directions. The meaning of the \emph{potential} term $Q$ will be clear later, when we will introduce the Hamiltonian associated with the LQ problem.

We assume that the system is \emph{controllable}, namely there exists $m >0$ such that
\begin{equation}
\rank(B,AB,\ldots,A^{m-1}B) = n.
\end{equation}
This hypothesis implies that, for any choice of $t_1,x_0,x_1$, the set of controls $u$ such that the associated trajectory $x_u :[0,t_1] \to \mathbb{R}^n$ connects $x_0$ with $x_1$ in time $t_1$ is non-empty.

It is well known that the optimal trajectories of the LQ system are projections $(p,x) \mapsto x$ of the solutions of the Hamiltonian system
\begin{equation}
\dot{p}  = -\partial_x H (p,x), \qquad \dot{x} = \partial_p H (p,x), \qquad (p,x) \in T^*\R^n = \R^{2n},
\end{equation}
where the Hamiltonian function $H: \mathbb{R}^{2n} \to \mathbb{R}$ is defined by
\begin{equation}\label{eq:Hamiltonian}
H(p,x) = \frac{1}{2}(p,x)^* \H \begin{pmatrix}
p \\ x
\end{pmatrix}, \qquad \H = \begin{pmatrix}
BB^* & A \\ A^* & Q
\end{pmatrix}.
\end{equation}
We denote by $P_t : \R^{2n} \to \R^{2n}$ the flow of the Hamiltonian system, which is defined for all $t \in \R$. 
To exploit the natural symplectic setting on $T^*\R^n = \R^{2n}$, we employ canonical coordinates $(p,x)$ such that the symplectic form $\omega = \sum_{i=1}^n dp_i \wedge dx_i$ is represented by the matrix $\J = \left(\begin{smallmatrix}
0 & \mathbb{I}_n \\ -\mathbb{I}_n & 0
\end{smallmatrix}\right)$. The flow lines of $P_t$ are precisely the integral lines of the \emph{Hamiltonian vector field} $\vec{H} \in \text{Vec}(\mathbb{R}^{2n})$, defined by $dH(\cdot) = \omega(\,\cdot\,,\vec{H})$. More explicitly
\begin{equation}
\vec{H}_{(p,x)}=\begin{pmatrix} -A^* & -Q \\ BB^* & A \end{pmatrix}\begin{pmatrix}
p \\ x
\end{pmatrix} = -\J\H \begin{pmatrix}
p \\ x
\end{pmatrix}.
\end{equation}
By the term Hamiltonian vector field, we denote both the linear field $\vec{H}$ and the associated matrix $-\J\H$. The Hamiltonian flow can be explicitly written in terms of the latter as
\begin{equation}
P_t = e^{-t\J\H},
\end{equation}
where the r.h.s. is the standard matrix exponential.

\subsection*{Conjugate times}

We stress that not all the integral lines of the Hamiltonian flow lead to minimizing solutions of the LQ problem, since they only satisfy first order conditions for optimality. For this reason, they are usually called \emph{extremals}. Sufficiently short segments, however, are optimal, but they lose optimality at some time $t>0$, called the \emph{first conjugate time}. In the following, we give a geometrical definition of conjugate time, in terms of curves in the Grassmannian of Lagrangian subspaces of $\R^{2n}$.

We say that a subspace $\Lambda \subset \R^{2n}$ is \emph{Lagrangian} if $\omega|_\Lambda \equiv 0$, and $\dim \Lambda = n$. A notable example of Lagrangian subspace is the \emph{vertical} subspace, that is $\ve := \{(p,0)|\,p \in \R^n\}$.

\begin{definition}
The Jacobi curve $J(\cdot)$ is the following family of Lagrangian subspaces of $\R^{2n}$
\begin{equation}
J(t):= e^{t\J\H}\ve, \qquad \ve :=\{(p,0)|\,p \in \R^n\}.
\end{equation}
From the geometrical viewpoint, $J(\cdot)$ is a smooth curve in the submanifold of the Grassmannian of the $n$-dimensional subspaces of $\mathbb{R}^{2n}$ defined by the Lagrangian subspaces. 
\end{definition}
\begin{definition}
We say that $t$ is a conjugate time if $J (t) \cap \ve \neq 0$. The \emph{multiplicity} of the conjugate time $t$ is the dimension of the intersection.
\end{definition}
In the language introduced by V. Arnold, these are times of \emph{verticality} of the Jacobi curve. It is not hard to show that $t$ is a conjugate time if and only if there exist solutions of the Hamilton equations such that $x(0) = x(t) = 0$.

We briefly recall the connection between conjugate times and second order conditions for optimality. The solutions of the LQ problems can be seen as constrained minima of the quadratic functional $\phi_{t_1}$ on $\mathcal{U}(x_0,x_1) \subset L^2([0,t_1],\R^k)$ given by all the controls $u$ such that $x_u(0) = x_0$ and $x_u(t_1) = x_1$. It is easy to check that $\mathcal{U}(x_0,x_1) = u^* + \mathcal{U}(0,0)$ for any $u^* \in \mathcal{U}(x_0,x_1)$, that is $\mathcal{U}(x_0,x_1)$ is an affine space over the vector space $\mathcal{U}(0,0)$. For this reason, the behaviour of $\phi_{t_1}$, restricted to $\mathcal{U}(0,0)$ provides all the informations about optimality. It is a well known fact that the number of conjugate times in the interval $(0,t_1)$, counted with their multiplicity, is equal to the negative inertia index of the quadratic form $\phi_{t_1} : \mathcal{U}(0,0) \to \R$ (this can be proved directly with the techniques in~\cite[Propositions 16.2, 16.3]{AAAbook}, see also~\cite[Theorem I.2]{NoteCime} for a more general setting). The occurrence of conjugate times implies that an extremal cannot be a minimizer, since one can find a small variation of $u^*$ that decreases the value of $\phi_{t_1}$. The first conjugate time determines existence and uniqueness of minimizing solutions of the LQ problem, as specified by the following proposition.

\begin{prop}
Let $\bar{t}$ be the first conjugate time, namely $\bar{t} := \inf\{t>0|\,J(t) \cap \ve \neq 0\}$. 
\begin{itemize}
\item For $t_1<\bar{t}$, for any $x_0,x_1$ there exists a unique minimizer connecting $x_0$ with $x_1$ in time $t_1$. 
\item For $t_1>\bar{t}$, for any $x_0,x_1$ there exists no minimizer connecting $x_0$ with $x_1$ in time $t_1$.
\item For $t_1= \bar{t}$, existence of minimizers depends on the initial data.
\end{itemize}
\end{prop}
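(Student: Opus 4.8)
The plan is to translate the question about minimizers of $\phi_{t_1}$ into a spectral question about its second variation, and then read off the answer from the dictionary (recalled in the excerpt) between that spectrum and conjugate times. Fix $u^\ast \in \mathcal{U}(x_0,x_1)$. Since $\mathcal{U}(x_0,x_1) = u^\ast + \mathcal{U}(0,0)$ and $\phi_{t_1}$ is quadratic, for $v \in \mathcal{U}(0,0)$ we may write
\[
\phi_{t_1}(u^\ast + v) = \phi_{t_1}(u^\ast) + \ell(v) + \mathcal{Q}_{t_1}(v),
\]
where $\mathcal{Q}_{t_1} := \phi_{t_1}|_{\mathcal{U}(0,0)}$ is the quadratic part and $\ell$ is a continuous linear functional on $\mathcal{U}(0,0)$ that depends on $u^\ast$, hence on the boundary data $x_0,x_1$. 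Thus the whole problem reduces to the self-adjoint operator representing $\mathcal{Q}_{t_1}$ on the Hilbert space $\mathcal{U}(0,0)$: a unique minimizer exists when $\mathcal{Q}_{t_1}$ is coercive; the functional is unbounded below (no minimizer) as soon as $\mathcal{Q}_{t_1}$ has a strictly negative direction; and when $\mathcal{Q}_{t_1}\geq 0$ with nontrivial kernel $N$, a minimizer exists if and only if $\ell|_N \equiv 0$.

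The structural fact that makes this workable is that $\mathcal{Q}_{t_1}$ is a compact perturbation of a positive multiple of the identity. Indeed the control-to-trajectory map $u \mapsto x_u$, with $x_u(t) = \int_0^t e^{(t-s)A}Bu(s)\,ds$, is compact on $L^2$; hence $v \mapsto \int_0^{t_1} x_v^\ast Q x_v\,dt$ is a compact quadratic form and, up to positive scaling, $\mathcal{Q}_{t_1} = \mathbb{I} - K$ with $K$ compact and self-adjoint on $\mathcal{U}(0,0)$. Its spectrum is therefore discrete and can accumulate only at the strictly positive value coming from the identity part. This is the crucial upgrade: positive definiteness of $\mathcal{Q}_{t_1}$ is equivalent to coercivity, because the eigenvalues cannot accumulate at $0$; and if $\mathcal{Q}_{t_1}\geq 0$ with finite-dimensional kernel $N$, then $\mathcal{Q}_{t_1}$ is automatically coercive on $N^\perp$.

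It remains to invoke the dictionary between the spectrum of $\mathcal{Q}_{t_1}$ and conjugate times. By the second-variation correspondence recalled in the excerpt, the negative inertia index of $\mathcal{Q}_{t_1}$ equals the number of conjugate times in $(0,t_1)$ counted with multiplicity; moreover every element of $\ker\mathcal{Q}_{t_1}$ is the control of an extremal with $x(0)=x(t_1)=0$, so $\dim\ker\mathcal{Q}_{t_1} = \dim(J(t_1)\cap\ve)$ is the multiplicity of $t_1$ as a conjugate time. Noting also that the conjugate set is closed, so that $\bar t$ is itself a conjugate time whenever it is finite, the three cases follow:
\begin{itemize}
\item If $t_1 < \bar t$, there are no conjugate times in $(0,t_1]$, so $\mathcal{Q}_{t_1}$ has zero negative index and trivial kernel, i.e.\ $\mathcal{Q}_{t_1} > 0$; by the previous paragraph it is coercive, and a unique minimizer exists for every $x_0,x_1$.
\item If $t_1 > \bar t$, then $\bar t \in (0,t_1)$ is a conjugate time, so the negative index is at least $1$: there is $w$ with $\mathcal{Q}_{t_1}(w) < 0$, whence $\phi_{t_1}(u^\ast + s w)\to -\infty$ and no minimizer exists.
\item If $t_1 = \bar t$, the negative index is $0$ but $N := \ker\mathcal{Q}_{t_1}\neq 0$; thus $\mathcal{Q}_{t_1}\geq 0$ and a minimizer exists if and only if $\ell|_N\equiv 0$, a condition genuinely depending on $x_0,x_1$ through $u^\ast$.
\end{itemize}

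The only genuinely delicate point is the first case: in infinite dimensions positive definiteness alone does not produce a minimizer, and the argument hinges entirely on the compact-perturbation structure $\mathcal{Q}_{t_1} = \mathbb{I} - K$ to promote positivity to coercivity (and likewise to control $\mathcal{Q}_{t_1}$ on $N^\perp$ in the boundary case). I would therefore concentrate the effort on verifying compactness of $u \mapsto x_u$ and the resulting spectral gap; the case analysis above is then immediate from the index–kernel dictionary.
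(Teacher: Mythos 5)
The paper states this proposition without proof: it is background material in the introduction, justified only by the preceding remark that the negative inertia index of $\phi_{t_1}|_{\mathcal{U}(0,0)}$ equals the number of conjugate times in $(0,t_1)$, with references to the literature for that fact. Your argument is a correct completion along exactly the route that remark suggests, and the one genuinely non-routine ingredient you supply --- that $\phi_{t_1}|_{\mathcal{U}(0,0)}$ is identity-plus-compact, so positivity upgrades to coercivity and the form is coercive on the orthogonal complement of its kernel --- is precisely what is needed to make the existence claims rigorous in infinite dimensions. The remaining inputs you invoke (the index dictionary, the identification of $\ker\phi_{t_1}|_{\mathcal{U}(0,0)}$ with $J(t_1)\cap\ve$, and closedness of the set of conjugate times together with $\bar{t}>0$ from ampleness) are standard and consistent with the paper's framework, so I see no gap.
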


In this paper we completely characterise the occurrence of conjugate times for a controllable LQ problem. In particular, we prove the following result.

\begin{mainthm}\label{t:main}
The conjugate times of a controllable linear quadratic optimal control problem obey the following dichotomy:
\begin{itemize}
\item If the Hamiltonian field $\vec{H}$ has at least one odd-dimensional Jordan block corresponding to a pure imaginary eigenvalue, the number of conjugate times in the interval $[0,T]$ grows to infinity for $T\to \pm\infty$.
\item If the Hamiltonian field $\vec{H}$ has no odd-dimensional Jordan blocks corresponding to a pure imaginary eigenvalue, there are no conjugate times.
\end{itemize}
\end{mainthm}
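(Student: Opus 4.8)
The plan is to translate the counting of conjugate times into the geometry of the Jacobi curve and then to exploit a monotonicity coming from the conservation of $H$. Writing $M:=\J\H$, so that $J(t)=e^{tM}\ve$ and $\spec M=-\spec\vec{H}$ with the identical Jordan structure along the imaginary axis, I first observe that $t$ is a conjugate time exactly when the lower-left $n\times n$ block $X_{21}(t)$ of $e^{tM}$ is singular: indeed $J(t)\cap\ve\neq0$ forces a nonzero $p$ with $X_{21}(t)p=0$. Hence conjugate times are the zeros of the real-analytic function $t\mapsto\det X_{21}(t)$, with multiplicity $\dim\ker X_{21}(t)$, and in particular they are isolated unless $\det X_{21}\equiv0$. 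The cornerstone of the argument is that $J(\cdot)$ is a \emph{monotone} curve in the Lagrange Grassmannian. Identifying the velocity $\dot J(t)$ with the quadratic form $v\mapsto\omega(v,Mv)$ on $J(t)$ and using $\J M=-\H$, one gets $\dot J(t)[v]=-v^*\H v=-2H(v)$; since $H$ is constant along the flow and $H(p,0)=\tfrac12|B^*p|^2\ge0$ on $\ve$, the form $\dot J(t)$ is negative semidefinite. Controllability of $(A,B)$ guarantees, through the Kalman condition, that this form cannot remain degenerate on a fixed subspace, so $J(\cdot)$ is \emph{ample}: it genuinely moves and its crossings with $\ve$ are isolated.

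Next I would reduce the global count to a spectral computation. Since $M\in\mathfrak{sp}(2n,\R)$, the space $\R^{2n}$ splits as a symplectically orthogonal direct sum of $M$-invariant subspaces, grouped according to the partition of $\spec M$ into the purely imaginary eigenvalues (including $0$) and those with nonzero real part. On each summand the symplectic form and the flow restrict, so the rotation number $\hat\rho(M)\in\R$ of the one-parameter subgroup $e^{tM}$ in $\mathrm{Sp}(2n,\R)$ is additive over the splitting. A standard property of the Maslov index then yields $\#\{\text{conjugate times in }[0,T]\}=c\,|\hat\rho(M)|\,|T|+O(1)$ as $|T|\to\infty$, the bounded term being controlled because replacing the reference Lagrangian $\ve$ by another one changes the count by at most $n$. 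The summands with nonzero real part are hyperbolic and contribute nothing to $\hat\rho(M)$, so the whole rotation is concentrated on the blocks lying on the imaginary axis.

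The main obstacle, and the technical heart of the proof, is the per-block computation on a purely imaginary pair: I must show that a Jordan block of odd dimension contributes a nonzero amount to $\hat\rho(M)$, while an even-dimensional block contributes zero. The natural tool is the Krein signature, namely the signature of the Hermitian form $u\mapsto\tfrac{1}{i}\omega(\bar u,u)$ on the generalized eigenspace of $i\beta$; the monotonicity established above forces these local contributions to share a common sign, so that no cancellation between $+i\beta$ and $-i\beta$, or between distinct blocks, can occur. A direct analysis of the real symplectic normal form of a single block then shows that an even block carries a balanced (signature zero) structure whose rotation averages out, whereas an odd block leaves one unpaired direction producing a definite rotation; the eigenvalue $0$ requires separate care, since there the odd blocks come in pairs, and I would treat it by the same normal-form computation. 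Combining these: if some odd imaginary block is present then $\hat\rho(M)\neq0$, the count grows to infinity, and monotonicity guarantees the corresponding zeros of $\det X_{21}$ are not cancelled by the other factors, giving the first alternative. In the opposite case $\hat\rho(M)=0$ and the same normal-form analysis shows that $\det X_{21}(t)$ has no zero for $t\neq0$, so there are no conjugate times, which is the second alternative.
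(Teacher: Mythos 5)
Your overall architecture (monotonicity and ampleness of the Jacobi curve, a symplectically orthogonal spectral splitting, a per-block normal-form computation, and stability of the Maslov index under change of the reference Lagrangian) is close in spirit to the paper's, but two of your key steps have genuine gaps. First, the assertion that ``monotonicity forces these local contributions to share a common sign, so that no cancellation between distinct blocks can occur'' is false: monotonicity only encodes $H|_\ve\ge 0$, and the Krein signatures attached to the various imaginary eigenvalues need not agree. In the diagonalizable purely imaginary normal form $H=\tfrac12\sum_j\omega_j(p_j^2+x_j^2)$ the signed frequencies $\omega_j$ can have both signs; what controllability actually yields (via Fa{\u\i}busovich's existence of a Lagrangian plane on which $H>0$, combined with Givental's criterion $\omega_j+\omega_{n-j+1}>0$) is only the inequality $\sum_j\omega_j>0$, i.e.\ the \emph{total} rotation is positive even though individual blocks may rotate the ``wrong'' way. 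Without this input your claim that $\hat\rho(M)\neq0$ whenever an odd imaginary block is present is unsupported, and the per-block statement (odd block gives nonzero rotation, even block gives zero) is asserted rather than carried out; that computation is exactly where the Williamson normal forms and the construction of $2k$-dimensional invariant isotropic subspaces inside each imaginary block must enter.

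Second, your treatment of the second alternative does not prove what the theorem claims. From $\hat\rho(M)=0$ you can conclude at most that the Maslov index $J_{[0,T]}\cdot\mathcal{M}_{\ve}$ stays bounded, hence (by monotonicity, which makes all crossings count with the same sign) that there are finitely many conjugate times; the theorem asserts there are \emph{none}, and ``the same normal-form analysis shows that $\det X_{21}(t)$ has no zero for $t\neq0$'' is not an argument, since $X_{21}$ mixes all the spectral blocks and depends on the position of $\ve$ relative to them. The missing ingredient is the construction of a full $\vec{H}$-invariant \emph{Lagrangian} subspace $\Gamma\subset\R^{2n}$ (assembled from an invariant Lagrangian inside each even imaginary block, an invariant isotropic half of $E_0$, and the sum of the generalized eigenspaces with positive real part), together with the transversality lemma: an ample monotone curve transversal to a fixed Lagrangian subspace at one instant stays transversal for all times, and such a curve has no self-intersections; applied to $\Gamma$ and to $J(0)=\ve$ this excludes all conjugate times. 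You need to supply both of these steps for the proof to close.
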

In Sec.~\ref{s:main}, we also provide estimates for the first conjugate time, in terms of the (signed) eigenvalues of $\vec{H}$ (see Corollaries~\ref{c:estimate1} and~\ref{c:estimate2}).

Before passing to a more detailed description of curves of Lagrangian subspaces, we stress that the concept of Jacobi curves is not limited to LQ optimal control problems and can be defined for way more general geometrical structures, such as control systems with Tonelli Lagrangian including, among the others Riemannian, sub-Riemannian, Finsler and sub-Finsler manifolds. In these more general settings, however, we cannot exploit the natural linear structure of $\R^n$, and the Jacobi curve is a curve of subspaces of the tangent space to the cotangent bundle, associated with a fixed ``geodesic'' (i.e. locally minimizing curve) of the underlying structure. We refer the interested reader to \cite{curvature,geometryjacobi1,agrafeedback}.

The plan of the paper is as follows. In Sec.~\ref{s:preliminaries} we recall some basic facts about geometry of curves in Lagrange Grassmannian, and the main technical tool: the Maslov index. Then, in Sec.~\ref{s:main} we prove the main result.

\section{Curves in the Lagrange Grassmannian}\label{s:preliminaries}

Let $(\Sigma, \omega)$ be a $2n$-dimensional symplectic vector space. Recall that subspace $\Lambda  \subset \Sigma$ is called 
\emph{Lagrangian} if it has dimension $n$ and $\omega|_{\Lambda}\equiv 0.$  The \emph{Lagrange Grassmannian} $\L(\Sigma)$ is the set of all $n$-dimensional Lagrangian subspaces of $\Sigma$. 

\begin{prop}\label{p:lagrass} 
$\L(\Sigma)$ is a compact $n(n+1)/2$-dimensional submanifold of the Grassmannian of $n$-planes in $\Sigma$.
\end{prop}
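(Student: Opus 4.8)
The plan is to realise $\L(\Sigma)$ as an embedded submanifold of the ambient Grassmannian $\mathrm{Gr}(n,\Sigma)$ of $n$-planes by working in the standard affine charts of the latter, in which the Lagrangian condition linearises; compactness will then come essentially for free. Throughout I would use that $\mathrm{Gr}(n,\Sigma)$ is a compact smooth manifold of dimension $n^2$, with affine charts indexed by a choice of complementary $n$-plane.

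First I would settle compactness. Since the condition $\omega|_\Lambda \equiv 0$ is the vanishing of a continuous (indeed polynomial, in frame or Pl\"ucker coordinates) function on $\mathrm{Gr}(n,\Sigma)$, the set $\L(\Sigma)$ is closed in the compact space $\mathrm{Gr}(n,\Sigma)$, hence compact. This part is routine and requires no new input.

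The core of the argument is the chart construction. I would fix a Lagrangian splitting $\Sigma = \Lambda_0 \oplus \Delta$, which always exists, and consider the affine chart of $\mathrm{Gr}(n,\Sigma)$ consisting of all $n$-planes transversal to $\Delta$. Each such plane is the graph $\{v + Sv : v \in \Lambda_0\}$ of a unique linear map $S : \Lambda_0 \to \Delta$, so the chart is coordinatised by the $n^2$-dimensional space of such maps. Using that $\omega$ identifies $\Delta$ with $\Lambda_0^*$, a direct computation of $\omega$ on two graph vectors shows that the graph is Lagrangian precisely when the associated bilinear form on $\Lambda_0$ is symmetric; concretely, taking $\Lambda_0 = \ve$ and $\Delta = \{(0,x)\}$ and writing the plane as $\{(p,Sp) : p \in \R^n\}$, one finds $\omega\big((p_1,Sp_1),(p_2,Sp_2)\big) = p_1^*(S - S^*)p_2$, which vanishes for all $p_1,p_2$ iff $S = S^*$. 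Thus, inside this chart, $\L(\Sigma)$ is exactly the zero locus of the linear submersion $S \mapsto S - S^*$ onto the space of antisymmetric maps (of dimension $n(n-1)/2$), hence a linear subspace of dimension $n^2 - n(n-1)/2 = n(n+1)/2$.

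To finish I would let $\Delta$ range over all Lagrangian complements: since every Lagrangian plane admits a transversal Lagrangian complement, these charts cover $\L(\Sigma)$, and the induced coordinates are the symmetric matrices, of dimension $n(n+1)/2$. Because each such chart is the restriction of an ambient Grassmannian chart in which $\L(\Sigma)$ is cut out by the linear equations $S = S^*$, the transition maps are restrictions of the smooth (rational) Grassmannian transition maps and hence smooth, and $\L(\Sigma)$ is an embedded submanifold of codimension $n(n-1)/2$. The main point to verify carefully is the linearisation in the chart—that the Lagrangian condition is exactly symmetry of $S$—together with the standard facts that any Lagrangian has a Lagrangian complement and that the Grassmannian charts overlap smoothly; none of these presents a genuine obstacle, the computation in the chart being the only real content.
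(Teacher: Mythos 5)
Your proposal is correct and follows essentially the same route as the paper: graph charts over a Lagrangian splitting in which transversal $n$-planes correspond to matrices $S$, the Lagrangian condition becomes $S=S^*$, giving $n(n+1)/2$-dimensional submanifold coordinates, with compactness from closedness in the compact Grassmannian. The only cosmetic difference is that you make the ambient-chart/linear-submersion viewpoint explicit, which the paper leaves implicit.
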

\begin{proof}
Let $\Delta \in \L(\Sigma)$, and consider the set $\Delta^{\pitchfork}:= \{\Lambda \in \L(\Sigma)\,|\,  \Lambda \cap \Delta =0 \}$ of all Lagrangian subspaces transversal to $\Delta$.
Clearly, the collection of these sets for all $\Delta \in \L(\Sigma)$ is an open cover of $\L(\Sigma)$. Then it is  sufficient to find submanifold coordinates on each $\Delta^{\pitchfork}$. 

Let us fix any Lagrangian complement $\Pi$ of $\Delta$ (which always exists, though it is not unique).
Every $n$-dimensional subspace $\Lambda \subset \Sigma$ that is transversal to $\Delta$ is the graph of a linear map from $\Pi$ to $\Delta$. Choose an adapted Darboux basis on $\Sigma$, namely a basis $\{e_i,f_i\}_{i=1}^n$ such that
\begin{gather}
\Delta = \spn\{f_1,\ldots,f_n\}, \qquad \Pi = \spn\{e_1,\ldots,e_n\}, \\
\omega(e_i,f_j) - \delta_{	ij} = \omega(f_i,f_j) = \omega(e_i,e_j) = 0, \qquad i,j=1,\ldots,n.
\end{gather} 
In these coordinates, the linear map is represented by a matrix $S_{\Lambda}$ such that
\begin{equation} 
\Lambda \cap \Delta=0 \Leftrightarrow \Lambda=\{(p,S_{\Lambda} p)| \,p \in \Pi\simeq \mathbb{R}^{n}\}.  
\end{equation}
Moreover it is easy to see that $\Lambda \in \L(\Sigma) $ if and only if $ S_{\Lambda}=S_{\Lambda}^{*}$.
Hence, the open set $\Delta^{\pitchfork}$ of all Lagrangian subspaces transversal to $\Delta$ is parametrized by the set of symmetric matrices, and this gives smooth submanifold coordinates on $\Delta^\pitchfork$.
This also proves that the dimension of $\L(\Sigma)$ is $n(n+1)/2$. Finally, as a closed subset of a compact manifold, $\L(\Sigma)$ is compact.
\end{proof}

Fix now $\Lambda \in \L(\Sigma)$. The tangent space $T_{\Lambda }\L(\Sigma)$ to the Lagrange Grassmannian at the point $\Lambda$ can be canonically identified with the set of quadratic forms on the space $\Lambda$ itself, namely
\begin{equation}
T_{\Lambda}\L(\Sigma)\simeq Q(\Lambda).
\end{equation}
Indeed, consider a smooth curve $\Lambda(\cdot)$ in $\L(\Sigma)$ such that $\Lambda(0)=\Lambda$, and denote by $\dot{\Lambda}\in T_{\Lambda}\L(\Sigma)$ its tangent vector. For any point $z\in \Lambda$ and any smooth extension $z(t)\in \Lambda(t)$, we define the quadratic form
\begin{equation} 
\dot{\Lambda}:=  z \mapsto \omega(z,\dot{z}), 
\end{equation} 
where $\dot{z}= \dot{z}(0)$.
A simple check shows that the definition does not depend on the extension $z(t)$. Finally, if in local coordinates $\Lambda(t)=\{(p,S(t)p)|\,p\in \R^{n}\}$, the quadratic form $\dot{\Lambda}$ is represented by the matrix $\dot S(0)$. In other words, if $z \in \Lambda$ has coordinates $p \in \mathbb{R}^n$, then $\dot{\Lambda}[z] = p^*\dot{S}(0)p$.

\subsection{Transversality properties}
In this section we introduce some important properties of curves in the Lagrange Grassmannian. Then we discuss the specific case of a Jacobi curve. Let $J(\cdot)\in \L(\Sigma)$ be a smooth curve in the Lagrange Grassmannian. For $i \in \mathbb{N}$, consider 
\begin{equation}
J^{(i)}(t)=\tx{span}\left\{\frac{d^{j}}{dt^{j}}\ell(t)\bigg| \ \ell(t)\in J(t),\, \ell(t) \text{ smooth},\, 0\leq j \leq i\right\}\subset \Sigma, \qquad i\geq 0.
\end{equation}
\begin{def} \label{d:amplestar}
The subspace $J^{(i)}(t)$ is the \emph{i-th extension} of the curve $J(\cdot)$ at $t$. The flag
\begin{equation}
J(t) = J^{(0)}(t)\subset J^{(1)}(t)\subset J^{(2)}(t)\subset \ldots\subset \Sigma,
\end{equation}
is the \emph{associated flag of the curve} at the point $t$. The curve $J(\cdot)$ is called:
\begin{itemize}
\item[(i)] \emph{equiregular} at $t$ if $\text{dim }J^{(i)}(\cdot)$ is locally constant at $t$, for all $i \in \N$,
\item[(ii)] \emph{ample} at $t$ if there exists $N\in \N$ such that $J^{(N)}(t)=\Sigma$,
\item[(iii)] \emph{monotone increasing} (resp. \emph{decreasing}) at $t$ if $\dot{J}(t)$ is non-negative definite (resp. non-positive definite) as a quadratic form.
\end{itemize}
\end{def}

In coordinates, $J(t)=\{(p,S(t)p)|\  p\in \R^{n}\}$ for some smooth family of symmetric matrices $S(t)$. The curve is ample at $t$ if and only if there exists $N\in \N$ such that 
\begin{equation}
\text{rank} \{\dot S(t), \ddot S(t),\ldots, S^{(N)}(t)\}=n.
\end{equation}
We say that the curve is equiregular, ample or monotone (increasing or decreasing) if it is equiregular, ample or monotone for all $t$ in the domain of the curve.

A crucial property of ample, monotone curves is described in the following lemma.
\begin{lem}\label{l:ampletrasv}
Let $J(\cdot) \in \L(\Sigma)$ a monotone, ample curve at $t_0$. Then, for any fixed Lagrangian subspace $\Lambda$, there exists $\eps > 0$ such that $J(t) \cap \Lambda = 0$ for $0<|t - t_0| <\eps$.
\end{lem}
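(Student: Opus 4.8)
The plan is to pass to affine coordinates on the Lagrange Grassmannian and turn the statement into a quantitative fact about a curve of symmetric matrices. First I would fix a Lagrangian subspace $\Delta$ transversal to both $J(t_0)$ and $\Lambda$ (such a $\Delta$ exists, as the Lagrangians transversal to a fixed one are open and dense). In the chart $\Delta^{\pitchfork}$ of Proposition~\ref{p:lagrass} both subspaces become graphs, $J(t) = \{(p,S(t)p)\}$ and $\Lambda = \{(p,S_\Lambda p)\}$, with $S(t),S_\Lambda$ symmetric and $S_\Lambda$ constant. Setting $M(t) := S(t) - S_\Lambda$, the two subspaces intersect nontrivially exactly when $\det M(t) = 0$, so the claim becomes: $M(t)$ is invertible for $0 < |t - t_0| < \eps$. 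In these coordinates monotonicity reads $\dot S(t) \ge 0$ for all $t$, while ampleness at $t_0$ says that the symmetric matrices $\dot S(t_0), \ddot S(t_0), \dots$ have trivial common kernel. If $J(t_0) \cap \Lambda = 0$ the conclusion is immediate, transversality being an open condition; so assume $W := \ker M(t_0) = J(t_0) \cap \Lambda \neq 0$.

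The engine of the proof is the scalar function $g_v(t) := v^* M(t) v$ attached to a vector $v \in W$. By monotonicity $g_v' = v^* \dot S v \ge 0$, so $g_v$ is nondecreasing, and $g_v(t_0) = 0$. Moreover $g_v \not\equiv 0$ near $t_0$ for $v \neq 0$: otherwise $v^* \dot S(t) v \equiv 0$, and since $\dot S(t)\ge 0$ this forces $\dot S(t) v \equiv 0$, hence $S^{(j)}(t_0) v = 0$ for all $j \ge 1$, contradicting ampleness. Thus $g_v$ is strictly negative to the left of $t_0$ and strictly positive to the right.

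I would then argue by contradiction: suppose there is a sequence $t_k \to t_0$ with (say) $t_k > t_0$ and unit vectors $v_k$ with $M(t_k) v_k = 0$; passing to a subsequence $v_k \to v_0$, continuity gives $M(t_0) v_0 = 0$, i.e.\ $v_0 \in W$. Decompose $v_k = a_k + b_k$ with $a_k \in W$ and $b_k \in W^\perp$, so $a_k \to v_0$ and $b_k \to 0$. Projecting the identity $M(t_k) v_k = 0$ onto $W^\perp$ and using that on $W^\perp$ the symmetric map $M(t_0)$ is invertible (its kernel being exactly $W$) yields the rate $\|b_k\| = O(t_k - t_0)$. Projecting instead onto $W$, the $b_k$-contribution is of order $O((t_k-t_0)^2)$ and the leading term gives $(t_k-t_0)\,P_W \dot S(t_0) a_k + O((t_k-t_0)^2) = 0$; dividing by $t_k - t_0$ and letting $k \to \infty$ produces $P_W \dot S(t_0) v_0 = 0$. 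Since $\dot S(t_0)\ge 0$ this means $\dot S(t_0) v_0 = 0$, so the limiting null direction is forced into the strictly smaller space $W \cap \ker \dot S(t_0)$.

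This ``gain of one derivative'' is the heart of the matter, and the main obstacle is to organize its iteration. Monotonicity is what makes the machine turn: if $\dot S(t_0) v = 0$ then $t_0$ minimizes $t \mapsto v^* \dot S(t) v \ge 0$, forcing $v^* \ddot S(t_0) v = 0$ on all of $\ker \dot S(t_0)$, so the even-order compressions vanish automatically and the next genuinely constraining term is of odd order (exactly as in the scalar model $S(t)=t^3$). Repeating the projection argument along the descending chain $W \supsetneq W\cap\ker\dot S(t_0) \supsetneq \cdots$ confines $v_0$ to the common kernel of all the $S^{(j)}(t_0)$, which ampleness reduces to $\{0\}$, contradicting $|v_0| = 1$. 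The argument for $t_k < t_0$ is identical (the signs in $g_v$ reverse but the null-vector analysis is unchanged), which yields the two-sided punctured neighborhood asserted by the lemma.
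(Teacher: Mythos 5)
Your setup (the chart $\Delta^{\pitchfork}$, the matrix $M(t)=S(t)-S_\Lambda$, and the scalar functions $g_v$) is sound, and your second paragraph correctly extracts the key consequence of monotonicity plus ampleness: for $v\in W$ the nondecreasing function $g_v$ cannot vanish identically on any one-sided neighbourhood of $t_0$, hence is strictly positive for $t>t_0$ and strictly negative for $t<t_0$. The problem is what you do next. The contradiction argument with the sequence $t_k\to t_0$ hinges entirely on the iteration you defer to the last paragraph, and that iteration, as described, does not close. After the first step you only know $v_0\in W\cap\ker\dot S(t_0)$, which ampleness does not exclude; the subsequent steps involve the \emph{compressions} of $S^{(3)}(t_0),S^{(5)}(t_0),\dots$ to the successive kernels, and the vanishing of $v_0$ against these compressed forms does not imply $S^{(j)}(t_0)v_0=0$ for the full matrices: the higher derivatives are not semidefinite, so an isotropic vector of a compression need not lie in the kernel of $S^{(j)}(t_0)$. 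Hence your descending chain is not shown to terminate in $\bigcap_j\ker S^{(j)}(t_0)$, which is the only set that ampleness tells you is trivial. The higher-order expansion of $M(t_k)v_k$, with the nested corrections to $v_k$ at different rates, is also left entirely implicit; organizing it is precisely the hard part of this kind of ``partial signature'' argument, and nothing in the sketch supplies it.

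The irony is that you already hold a two-line finish. Run your second paragraph not just for $v\in W$ but for every $v\neq 0$, with $f_v(t):=v^*(S(t)-S(t_0))v$: the same monotonicity-plus-ampleness argument gives $f_v(t)>0$ for $0<t-t_0<\eps$, i.e.\ $M(t)>M(t_0)$ as quadratic forms (and $M(t)<M(t_0)$ for $t<t_0$). By the Courant min--max characterisation, every eigenvalue of $M(t)$ then strictly exceeds the corresponding eigenvalue of $M(t_0)$ for $t>t_0$; the zero eigenvalues of $M(t_0)$ therefore move off zero immediately, while the nonzero ones stay bounded away from zero by continuity, so $\det M(t)\neq 0$ for $0<|t-t_0|<\eps$. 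This is exactly the paper's proof; no limiting kernel directions or iterated expansions are needed.
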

In other words, ample, monotone curves can intersect any fixed Lagrangian subspace $\Lambda$ only at a discrete set of times.
\begin{proof}
Without loss of generality, assume $t_0 =0$. Choose a Lagrangian splitting $\Sigma = \Lambda \oplus \Pi$, such that, for $|t|<\eps$, the curve is contained in the chart defined by such a splitting. In coordinates, $J(t)=\{(p,S(t)p)|\  p\in \R^{n}\}$, with $S(t)$ symmetric. The curve is monotone, then $\dot{S}(t)$ is a semidefinite symmetric matrix. Without loss of generality, we assume $\dot{S}(t) \geq 0$.
Assume that $J(0) \cap \Lambda \neq 0$. In coordinate, this means that $S(0)$ has some vanishing eigenvalues. We now show that the whole spectrum of $S(t)$ is strictly increasing in $t$, hence it moves away from zero for $t$ sufficiently small.

Notice that $S(t) - S(0) = \int_{0}^t \dot{S}(\tau)d\tau \geq 0$, by the monotonicity assumption. Then, for any $z \in \R^n$, consider the smooth function $t \mapsto f_z(t):=z^*[S(t)-S(0)]z$, which is non-decreasing and vanishes at $t=0$. Moreover, $f_z(t)$ cannot be constantly zero on any interval of the form $[0,\delta)$, otherwise $z$ would be in the kernel of $S(t)-S(0)$ for all $t \in [0,\delta)$ and, a fortiori, in the kernel of all the derivatives $S^{(N)}(0)$, which is absurd by the ampleness hypothesis. Therefore, $f_z(t) >0$ for $0<t<\varepsilon$. Since $z$ is arbitrary
%
%
%
\begin{equation}\label{eq:monotonicity}
S(t) > S(0), \qquad 0<t < \eps.
\end{equation}
Now, denote by $\lambda_1(t)\geq \ldots\geq\lambda_n(t)$ the eigenvalues of $S(t)$ at each fixed $t$. Then, by the Courant min-max principle, we have the following variational characterisation
\begin{equation}
\lambda_k(t) = \max\{\min\{x^*S(t)x|\, x \in U \subset \R^n,\, |x| =1 \}|\, \dim U = k\}, \qquad k=1,\ldots,n.
\end{equation}
Thus, by Eq.~\eqref{eq:monotonicity}, each eigenvalue is strictly increasing for $0<t<\eps$. So, even if $S(0)$ has a non-trivial kernel, it becomes non-degenerate for sufficiently small small $t>0$. The same argument shows that this is true also for $t<0$.
\end{proof}

Observe that, if $\Lambda = J(0)$, then $S(0) = 0$ in any chart given by the splitting $\Sigma = J(0) \oplus \Pi$. Therefore, the proof of Lemma~\ref{l:ampletrasv} implies that all the eigenvalues of $S(t)$ are strictly non-zero for all $|t|<\eps$, $t\neq 0$. If the curve is also monotone and ample, the only restriction on $\eps$ comes from the fact that $J(t)$ must belong to the given coordinate chart. In particular, the eigenvalues of $S(t)$ are strictly positive for all $t>0$ (and strictly negative for $t<0$) at least until the first intersection of $J(\cdot)$ with $\Pi$ occurs. This means that $J(\cdot)$ cannot have further intersections with $J(0)$ until it crosses $\Pi$. Thus, we obtain the following.

\begin{cor}\label{c:ampletrasv}
Let $J(\cdot) \in \L(\Sigma)$ a monotone, ample curve, such that $J(\cdot) \cap \Pi = 0$, for some Lagrangian subspace $\Pi$. Then $J(\cdot)$ has no self-intersections, namely $J(t_1) \cap J(t_2) = 0$ for all $t_1 \neq t_2$.
\end{cor}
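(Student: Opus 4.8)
The plan is to reduce this global statement to the local analysis already carried out in the proof of Lemma~\ref{l:ampletrasv}, applied with basepoint $J(t_1)$ for each fixed $t_1$, and to observe that the hypothesis $J(\cdot)\cap\Pi=0$ forces the curve to remain inside a single coordinate chart for all times. Concretely, it suffices to show that for every fixed $t_1$ one has $J(t)\cap J(t_1)=0$ for all $t\neq t_1$; since $t_1$ is arbitrary, this is precisely the absence of self-intersections.

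Fix $t_1$. Since $J(t_1)\cap\Pi=0$, the subspaces $J(t_1)$ and $\Pi$ form a Lagrangian splitting $\Sigma=J(t_1)\oplus\Pi$, and I would work in the associated chart, in which $J(t)=\{(p,S(t)p)\,|\,p\in\R^n\}$ for a smooth family of symmetric matrices $S(t)$, normalised so that $S(t_1)=0$ because $J(t_1)$ is the subspace sitting at the origin of this chart. The crucial point is that a Lagrangian subspace lies in this chart exactly when it is transversal to $\Pi$; by hypothesis $J(t)\cap\Pi=0$ for \emph{all} $t$, so the entire curve is described by a globally defined symmetric matrix $S(t)$, with no escape to infinity. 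Moreover, monotonicity and ampleness are intrinsic properties of the curve, defined without reference to any chart, so in the present chart we still have, say, $\dot S(t)\geq 0$ for all $t$, together with ampleness at $t_1$.

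I would then run the computation of Lemma~\ref{l:ampletrasv} with $t_1$ in place of $0$. For $z\in\R^n$ the function $f_z(t)=z^*[S(t)-S(t_1)]z=z^*S(t)z$ is non-decreasing, since $\dot f_z=z^*\dot S z\geq 0$, and vanishes at $t_1$; it cannot be constantly zero on any interval $[t_1,t_1+\delta)$, for otherwise $z$ would lie in the kernel of every derivative $S^{(N)}(t_1)$, contradicting ampleness at $t_1$. Being non-decreasing and not identically zero to the right of $t_1$, the function $f_z$ is in fact strictly positive for every $t>t_1$, and by the symmetric argument strictly negative for every $t<t_1$. As $z$ is arbitrary, this yields $S(t)>0$ for all $t>t_1$ and $S(t)<0$ for all $t<t_1$; in particular $S(t)$ is nonsingular for every $t\neq t_1$, which is exactly the condition $J(t)\cap J(t_1)=0$.

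The one point requiring care — and the only place where the transversality hypothesis on $\Pi$ enters — is the globalisation of the estimate. In Lemma~\ref{l:ampletrasv} the conclusion was merely local because a generic fixed Lagrangian subspace may cause the curve to leave the chart; here the condition $J(\cdot)\cap\Pi=0$ removes precisely this obstruction, so the monotonicity argument, which on its own already produces a strict inequality $S(t)>S(t_1)$ throughout the chart rather than only near $t_1$, applies on the whole time axis. Everything else is routine, and once the claim is established for each basepoint $t_1$ the corollary follows at once.
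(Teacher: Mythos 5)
Your argument is correct and is essentially the paper's own: the paper also fixes the splitting $\Sigma = J(t_1)\oplus\Pi$, notes that the hypothesis $J(\cdot)\cap\Pi=0$ keeps the whole curve in this single chart with $S(t_1)=0$, and globalises the strict monotonicity $S(t)>S(t_1)$ from the proof of Lemma~\ref{l:ampletrasv} to conclude that $S(t)$ is definite, hence nonsingular, for all $t\neq t_1$. The only cosmetic difference is that you run the argument explicitly at an arbitrary basepoint $t_1$, where the paper phrases it at $t_1=0$.
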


\subsection{Reduction}\label{s:red}
Let $(\Sigma,\omega)$ be a symplectic vector space, and let $\Gamma \subset \Sigma$ be an isotropic subspace, namely $\omega|_{\Gamma}\equiv 0$. For any subspace $V \subset \Sigma$, we denote by the symbol $V^\angle$ the corresponding $\omega$-orthogonal subspace.
\begin{definition}
The reduction of $(\Sigma,\omega)$ with respect to an isotropic subspace $\Gamma$ is the symplectic space $(\Sigma^\Gamma,\omega)$, where
\begin{equation}
\Sigma^\Gamma:=\Gamma^\angle / \Gamma.
\end{equation}
\end{definition}
The definition is well posed, since $\omega$ descends to a well-defined symplectic form on the quotient. Moreover, if $\dim\Sigma = 2n$ and $\dim\Gamma = k$, then $\Sigma^\Gamma$ is a $2(n-k)$-dimensional symplectic space.

The projection $\pi^\Gamma : \L(\Sigma) \to \L(\Sigma^\Gamma)$, defined by $\Lambda \mapsto \Lambda \cap \Gamma^\angle /\Gamma$, is not even continuous in general. Nevertheless, the following lemma holds true.
\begin{lem}\label{l:smoothproj}
The restriction of $\pi^\Gamma$ to $\Gamma^\pitchfork := \{\Lambda \in \L(\Sigma)|\, \Lambda\cap\Gamma = 0\}$ is smooth.
\end{lem}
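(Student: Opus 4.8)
The plan is to exhibit, around each point $\Lambda_0 \in \Gamma^\pitchfork$, an adapted Darboux chart in which $\pi^\Gamma$ becomes the \emph{linear} operation of extracting a sub-block of a symmetric matrix; smoothness is then immediate. The whole difficulty is concentrated in choosing the chart compatibly with $\Gamma$.

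First I would pass to a convenient basis. Fix $\Lambda_0 \in \Gamma^\pitchfork$, so that $\Lambda_0 \cap \Gamma = 0$. I would extend the isotropic subspace $\Gamma$ to a Lagrangian subspace $\Delta$ that is still transversal to $\Lambda_0$: this is a standard fact of symplectic linear algebra (an isotropic subspace disjoint from a given Lagrangian always extends to a Lagrangian complement of the latter). Choosing a Lagrangian complement $\Pi$ of $\Delta$, I obtain an adapted Darboux basis $\{e_i,f_i\}_{i=1}^n$ as in Proposition~\ref{p:lagrass}, arranged so that
\[
\Pi = \spn\{e_1,\ldots,e_n\},\quad \Delta = \spn\{f_1,\ldots,f_n\},\quad \Gamma = \spn\{f_1,\ldots,f_k\},\quad k = \dim\Gamma.
\]
Since $\Gamma \subset \Delta$ and $\Delta$ is Lagrangian, $\Delta \subset \Gamma^\angle$, and a direct computation gives $\Gamma^\angle = \spn\{e_{k+1},\ldots,e_n,f_1,\ldots,f_n\}$. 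Hence the classes $\{\bar e_{k+1},\ldots,\bar e_n,\bar f_{k+1},\ldots,\bar f_n\}$ form a Darboux basis of $\Sigma^\Gamma = \Gamma^\angle/\Gamma$, with reduced vertical $\bar\Delta := \Delta/\Gamma$ and complement $\bar\Pi$. Because $\Gamma \subset \Delta$, the open chart $\Delta^\pitchfork$ is contained in $\Gamma^\pitchfork$ and contains a neighborhood of $\Lambda_0$, so it suffices to compute $\pi^\Gamma$ there.

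The core step is then a short coordinate computation. In the chart $\Delta^\pitchfork$ every $\Lambda$ is the graph $\Lambda_S = \{(p,Sp)\,|\,p\in\Pi\}$ of a symmetric matrix $S$; writing $v \in \Lambda_S$ in the basis, its $e_i$-coordinate is $x_i$ and its $f_j$-coordinate is $(Sx)_j$. The condition $v \in \Gamma^\angle$ reads $x_1 = \cdots = x_k = 0$, so, block-decomposing $S = \left(\begin{smallmatrix} S_{11} & S_{12} \\ S_{12}^* & S_{22}\end{smallmatrix}\right)$ with $S_{22}$ of size $n-k$, the intersection $\Lambda_S \cap \Gamma^\angle$ consists of the vectors with $x = (0,x')$, whose image in $\Sigma^\Gamma$ (discarding the $\bar f_1,\ldots,\bar f_k$ components) is exactly $(\bar x', S_{22}\bar x')$. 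Thus $\pi^\Gamma(\Lambda_S)$ is the graph of $S_{22}$ over $\bar\Pi$; in particular it is transversal to $\bar\Delta$ and Lagrangian, so it lies in the reduced chart $\bar\Delta^\pitchfork$, with coordinate $\bar S = S_{22}$.

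Therefore, in the charts just constructed, $\pi^\Gamma$ is the map $S \mapsto S_{22}$ extracting the lower-right $(n-k)\times(n-k)$ block, which is linear and hence $C^\infty$. Since every point of $\Gamma^\pitchfork$ admits such a chart, the restriction $\pi^\Gamma|_{\Gamma^\pitchfork}$ is smooth. The only genuinely delicate point is the first one — producing a single Lagrangian chart $\Delta$ with $\Gamma \subset \Delta$ and $\Delta \cap \Lambda_0 = 0$ simultaneously — which I would settle by realizing $\Gamma$ as the graph of a linear map over $\Lambda_0$ and extending the associated symmetric form from $\Gamma$ to all of $\Pi$.
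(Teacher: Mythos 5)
Your argument is correct and is essentially the paper's own proof: both reduce to finding a Lagrangian subspace containing $\Gamma$ and transversal to the given point, and then observing that in the resulting Darboux chart $\pi^\Gamma$ is the extraction of the $(n-k)\times(n-k)$ block $S_{22}$, which is linear. The only cosmetic difference is that the paper takes the second leg of the splitting to be $\Lambda_0$ itself, whereas you allow an arbitrary Lagrangian complement of $\Delta$; the computation is identical.
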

\begin{proof}
Let $\Lambda \in \Gamma^\pitchfork$. We can always find a Lagrangian space $\Pi$ which contains $\Gamma$ and such that $\Pi\cap \Lambda = 0$. The proof is now trivial in charts given by a Darboux basis on the splitting $\Pi \oplus \Lambda$. Indeed, in these charts, the projection corresponds to take a $n-k \times n-k$ block of the representative matrix.
\end{proof}
The next lemma provides condition under which a monotone, ample Jacobi curve remains monotone and ample upon projection.
\begin{lem}\label{l:ampleproj}
Let $J(\cdot) \in \L(\Sigma)$ a monotone, ample (at $t_0$) curve such that $J(\cdot) \in \Gamma^\pitchfork$. Then the projection $J^\Gamma(\cdot):=\pi^\Gamma(J(\cdot))$ is a monotone, ample (at $t_0$) curve in $\L(\Sigma^\Gamma)$.
\end{lem}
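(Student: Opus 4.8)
The plan is to reduce everything to the coordinate description of the projection supplied by Lemma~\ref{l:smoothproj}, and then to treat monotonicity and ampleness separately, the latter being the only nontrivial point. Throughout I work in a neighborhood of $t_0$; since $J(\cdot)\in\Gamma^\pitchfork$, the curve $J^\Gamma(\cdot)$ is well defined and, by Lemma~\ref{l:smoothproj}, smooth.

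First I would fix good coordinates. As in the proof of Lemma~\ref{l:smoothproj}, choose a Lagrangian subspace $\Pi\supseteq\Gamma$ with $\Pi\cap J(t_0)=0$; since transversality is open, $\Pi\cap J(t)=0$ for $t$ near $t_0$. Choosing a Darboux basis adapted to $\Gamma$ and to this splitting, write $J(t)=\{(p,S(t)p)\}$ with $S(t)$ symmetric and smooth, in such a way that $\pi^\Gamma$ is realized by extracting the principal $(n-k)\times(n-k)$ block $S^\Gamma(t)$ of $S(t)$ corresponding to the reduction. Then $J^\Gamma(t)$ is represented by $S^\Gamma(t)$ in a Darboux basis of $\Sigma^\Gamma$, and for every $l$ the matrix $(S^\Gamma)^{(l)}(t_0)$ is exactly the block of $S^{(l)}(t_0)$. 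Monotonicity is then immediate: assuming without loss of generality $\dot S(t)\geq 0$, its principal submatrix $\dot S^\Gamma(t)$ satisfies $w^*\dot S^\Gamma(t)w=v^*\dot S(t)v\geq 0$, where $v$ is the lift of $w$ that is zero outside the block; hence $J^\Gamma$ is monotone increasing.

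Ampleness is the crux, and the only place where monotonicity is genuinely used (without it a principal block can lose rank, as one sees already for $n=2$). By the rank criterion it suffices to show $\bigcap_{l\geq 1}\ker(S^\Gamma)^{(l)}(t_0)=0$. Let $w$ lie in this intersection and let $v$ be its lift. Since $v^*S^{(l)}(t_0)v=w^*(S^\Gamma)^{(l)}(t_0)w=0$ for all $l\geq 1$, the nonnegative smooth function $g(t):=v^*\dot S(t)v$ has all derivatives $g^{(m)}(t_0)=v^*S^{(m+1)}(t_0)v$ vanishing, i.e. $g$ is flat at $t_0$. For an arbitrary vector $u$, Cauchy--Schwarz for the positive semidefinite form $\dot S(t)$ gives $|u^*\dot S(t)v|^2\leq \big(u^*\dot S(t)u\big)\,g(t)$. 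As $u^*\dot S(t)u$ is bounded near $t_0$ and $g$ is flat, the smooth function $t\mapsto u^*\dot S(t)v$ is itself flat at $t_0$, whence $u^*S^{(l)}(t_0)v=0$ for all $l\geq 1$ and all $u$, that is $v\in\bigcap_{l\geq 1}\ker S^{(l)}(t_0)$. Ampleness of $J$ at $t_0$ now forces $v=0$, hence $w=0$, which proves that $J^\Gamma$ is ample at $t_0$.

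I expect the coordinate setup and the monotonicity step to be routine bookkeeping, once the block description from Lemma~\ref{l:smoothproj} is in hand. The real obstacle is the ampleness step: passing from the vanishing of the restricted quadratic forms $(S^\Gamma)^{(l)}(t_0)$ on $w$ to the vanishing of the full vectors $S^{(l)}(t_0)v$. The mechanism that makes this work is precisely the flatness-plus-Cauchy--Schwarz argument above, which leverages the monotonicity hypothesis to control the off-diagonal blocks of the derivatives by their flat diagonal entries.
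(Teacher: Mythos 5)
Your proof is correct, and the coordinate setup and the monotonicity step coincide with the paper's. The ampleness step, however, follows a genuinely different route. The paper first reduces to the real-analytic case (replacing $S(t)$ by a high-order Taylor polynomial), then invokes the strict inequality $S(t)>S(t_0)$ established in the proof of Lemma~\ref{l:ampletrasv} to conclude that for each $y\neq 0$ the analytic function $t\mapsto y^*S_{22}(t)y$ is positive for small $t>t_0$, hence has a nonvanishing derivative at $t_0$; this forces the rank condition for $S_{22}$. You instead stay entirely in the smooth category: you take $w$ in the common kernel of all $(S^\Gamma)^{(l)}(t_0)$, observe that $g(t)=v^*\dot S(t)v$ is nonnegative and flat at $t_0$, and use the Cauchy--Schwarz inequality for the semidefinite form $\dot S(t)$ to show that all off-diagonal pairings $u^*S^{(l)}(t_0)v$ vanish as well, so that ampleness of $J$ kills $v$. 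Your argument buys a few things: it avoids the (somewhat informal) analyticity reduction, it does not rely on the strict monotonicity statement imported from Lemma~\ref{l:ampletrasv}, and it does not need the normalization $S(t_0)=0$; the price is the slightly more delicate flatness-plus-Cauchy--Schwarz mechanism, which you execute correctly (a nonnegative smooth flat function is $o(|t-t_0|^{2m})$ for every $m$, so its square root dominates a flat bound on the bilinear pairing). Both proofs make essential use of monotonicity, as your $n=2$ remark rightly notes.
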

\begin{proof}
By Lemma~\ref{l:smoothproj}, the projection $J^\Gamma(\cdot)$ is still smooth. We prove the Lemma by analysing the coordinate presentation of the curve. Without loss of generality, we choose $t_0 = 0$. We find $\Pi \in \L(\Sigma)$ such that $\Gamma \subset \Pi$, and $\Sigma = J(0)\oplus \Pi$. Therefore, we introduce Darboux coordinates $(p,x) \in \R^{2n}$ such that, for small $t$
\begin{equation}
\Pi = \{(0,x)|\,x\in\R^n\}, \qquad J(t) = \{(p,S(t)p)|\,p\in \R^n\}, \qquad S(0) =0.
\end{equation}
Moreover, if $\dim \Gamma = k$, we split $\R^n = \R^k \oplus \R^{n-k}$, and we write $x = (x_1,x_2)$ and $p=(p_1,p_2)$. Thus
\begin{equation}
\Gamma = \{((0,0),(x_1,0))|\,x_1 \in \R^k\}, \qquad \Gamma^\angle = \{((0,p_2),(x_1,x_2))|\, p_2,x_2 \in \R^{n-k}, \, x_1 \in \R^k\}.
\end{equation}
Accordingly, the matrix $S(t)$ splits as $\left(\begin{smallmatrix} S_{11}(t) & S_{12}(t) \\ S_{12}^*(t) & S_{22}(t) \end{smallmatrix}\right)$. In terms of these coordinates, and analogous coordinates on $\Sigma^\Gamma = \Gamma^\angle / \Gamma$, we obtain that the matrix representing the reduced curve is $S^\Gamma(t) := S_{22}(t)$ (which is a $n-k\times n-k$ symmetric matrix). More precisely
\begin{equation}
J^\Gamma(t) = \{(p_2,S_{22}(t) p_2)|\, p_2 \in \R^{n-k}\}.
\end{equation}
The original curve is monotone (say non-decreasing), then $\dot{S}(t) \geq 0$. Therefore, also $\dot{S}_{22}(t) \geq 0$, and $J^\Gamma(\cdot)$ is monotone too. 

We now prove that the reduced curve is still ample at $0$ if the original curve was. We assume $S(t)$ to be real-analytic, otherwise, it is sufficient to replace $S(t)$ with its Taylor polynomial of sufficiently high order. From the proof of Lemma~\ref{l:ampletrasv}, $S(t) > S(0) =0$ for $t>0$ sufficiently small. Thus, for all $y \in \R^{n-k}$, the function $t \mapsto y^*S_{22}(t)y$ is zero at $t=0$, and strictly positive for $t>0$. But an analytic function with these properties has at least a non-vanishing (strictly positive) derivative. Hence, for some $i >0$, $y^*S^{(i)}_{22}(0)y > 0$. Since this construction holds for any $y \in \R^{n-k}$, this implies
\begin{equation}
\rank\{\dot{S}_{22}(0),\ldots,S_{22}^{(N)}(0)\} = n-k,
\end{equation}
for some sufficiently large $N >0$.
\end{proof}

\subsection{Maslov index and conjugate times}
In this section we review a very useful homotopy invariant of curves in the Lagrange Grassmannian: the Maslov index, that is the intersection number of a curve with a certain pseudo-manifold in $\mathcal{L}(\Sigma)$. There are many things called Maslov index in different contexts, for a modern review we suggest \cite{barilari2013geometry}. Here we follow mainly the approach in \cite{NoteCime} and \cite{SympMeth}.

Let $\Pi\in \mathcal{L}(\Sigma)$, consider the following subset of $\mathcal{L}(\Sigma)$,
\begin{equation}
\mathcal{M}_{\Pi}=\mathcal{L}(\Sigma)\setminus\Pi^\pitchfork=\{\Lambda\in \mathcal{L}(\Sigma)|\,\Lambda\cap \Pi\ne 0\},
\end{equation}
which is called the \textit{train} of the Lagrangian subspace $\Pi$ due to V. Arnold. To see how $\mathcal{M}_{\Pi}$ looks locally, let $\Delta \in \mathcal{L}(\Sigma)$, $\Delta \cap \Pi = 0$. In coordinates induced by the splitting $\Sigma =\Pi\oplus \Delta$,
\begin{equation}
\Delta^\pitchfork = \{(p,Sp)|\,p \in \Pi \simeq \R^n,\,S \in Q(\R^n)\}.
\end{equation}
Therefore, in coordinates,
\begin{equation}
\Delta^{\pitchfork}\setminus\Pi^{\pitchfork}\simeq \{S \in Q(\R^n)|\, \ker S \ne 0\}.
\end{equation}
Hence the intersection of $\mathcal{M}_{\Pi}$ with the coordinate neighbourhood $\Delta^{\pitchfork}$ coincides with the set of all degenerate quadratic forms on $\mathbb{R}^n$. Notice that to a subspace $\Lambda$, which has $k$-dimensional intersection with $\Pi$ there corresponds a form with $k$-dimensional kernel. The set of degenerate forms constitute an algebraic hypersurface in the space of all quadratic forms $Q(\mathbb{R}^n)$.

We want to define the intersection number of a curve in $\mathcal{L}(\Sigma)$ with the train $\mathcal{M}_{\Pi}$. To do this, we need a ``co-orientation'' on $\mathcal{M}_{\Pi}$, so first we start describing its singular locus. We see that a point $\Lambda \in \mathcal{M}_{\Pi}$ is singular if its associated quadratic form has at least two-dimensional kernel, so $\mathcal{M}_{\Pi}$ is an algebraic hypersurface in $\mathcal{L}(\Sigma)$ and its singular locus is an algebraic subset of codimension three in $\mathcal{M}_{\Pi}$. Thus $\mathcal{M}_{\Pi}$ is a pseudo-manifold.

Now, let us define a \emph{canonical co-orientation} of the hypersurface $\mathcal{M}_{\Pi}$ at a non-singular point $\Lambda$, i.e. we indicate the ``positive and negative sides'' of $\mathcal{M}_{\Pi}$ in $\mathcal{L}(\Sigma)$. It is not difficult to see that vectors from $T_{\Lambda}\mathcal{L}(\Sigma)$ corresponding to positive definite and negative definite quadratic forms on $\Lambda$ are not tangent to $\mathcal{M}_{\Pi}$, then we have the following.

\begin{definition}
Let $\Lambda$ be a non-singular point of $\mathcal{M}_{\Pi}$. We consider as positive (negative) that side of $\mathcal{M}_{\Pi}$ towards which the positive (negative) definite elements of $T_{\Lambda}\mathcal{L}(\Sigma)$ are directed.
\end{definition}

We say that a curve $J(\cdot)$ in $\mathcal{L}(\Sigma)$ is in \textit{general position} (with respect to $\mathcal{M}_{\Pi}$) if $J(\cdot)$ intersects the non-singular locus of $\mathcal{M}_{\Pi}$ smoothly and transversally. The above co-orientation permits to define correctly the intersection number (or Maslov index) of a continuous curve in general position, with endpoints outside $\mathcal{M}_{\Pi}$, with the hypersurface $\mathcal{M}_{\Pi}$.

\begin{definition}
Let $J(t)$, $t_0\le t\le t_1$ be a continuous curve in general position in $\mathcal{L}(\Sigma)$ with respect to the train $\mathcal{M}_{\Pi}$ such that $J(t_0),J(t_1)\notin \mathcal{M}_{\Pi}$. The Maslov index $J_{[t_0,t_1]}\cdot\mathcal{M}_{\Pi}$ is the number of points where $J(\cdot)$ intersects $\mathcal{M}_{\Pi}$ in the positive direction minus the number of points where this curves intersects $\mathcal{M}_{\Pi}$ in the negative direction.
\end{definition}

A crucial property of the Maslov index is that it is a homotopy invariant of the curve, indeed a homotopy between curves in general position that leaves fixed the endpoints does not change the Maslov index. The proof of this fact is the same as for usual intersection number of a curve with a closed oriented hypersurface (see e.g. \cite{Milnor}). Notice that, since the singular locus of $\mathcal{M}_{\Pi}$ has codimension three, the generic homotopy moves the curve in general position. Thus, the Maslov index of any curve with endpoints not in $\mathcal{M}_{\Pi}$ is defined by putting the curve in general position.

\begin{definition}
Let $J(t)$, $t_0\le t\le t_1$ be a continuous curve (not necessarily in general position) in $\mathcal{L}(\Sigma)$ such that $J(t_0),J(t_1)\notin \mathcal{M}_{\Pi}$. The Maslov index $J_{[t_0,t_1]}\cdot\mathcal{M}_{\Pi}$ is defined as $J^{\prime}_{[t_0,t_1]}\cdot\mathcal{M}_{\Pi}$, where $J^{\prime}(t)$, $t_0\le t\le t_1$ is any curve in general position homotopic to $J_{[t_0,t_1]}$, with the same endpoints.
\end{definition}

A weak point of the definition of Maslov index is the necessity of putting the curve in general position. This does not look like a very efficient way to compute the intersection number since putting the curve in general position could imply the modification of maybe a nice object, but the fact that the Maslov index is homotopy invariant leads to a very simple and effective way to compute it.

\begin{lem}
Assume that the piece of curve $J_{[t_0, t_1]}$ belongs to the chart $\Delta^{\pitchfork}$, $\Delta\cap\Pi=J(t_0)\cap\Pi=J(t_1)\cap\Pi=0$. Let $S(t_i)$ be the symmetric matrix representing the subspace $J(t_i)$ in coordinates given by the splitting $\Sigma = \Delta \oplus \Pi$, that is $J(t_i) = \{(p,S(t_i)p)|p \in \Pi \simeq \R^n\}$. Thus
\begin{equation}
J_{[t_0, t_1]}\cdot\mathcal{M}_{\Pi}=\ind S(t_0)-\ind S(t_1),
\end{equation}
where $\ind S$ is the index of the quadratic form $z\mapsto z^*Sz$, $z\in \mathbb{R}^n$.
\end{lem}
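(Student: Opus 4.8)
The plan is to exploit the homotopy invariance of the Maslov index together with the explicit coordinate description of the train $\mathcal{M}_\Pi$ inside the chart $\Delta^\pitchfork$. Recall that the splitting $\Sigma = \Delta \oplus \Pi$ identifies $\Delta^\pitchfork$ with the full vector space $Q(\R^n)$ of symmetric matrices, under which $\mathcal{M}_\Pi \cap \Delta^\pitchfork$ becomes the degenerate locus $\{S \mid \ker S \neq 0\}$, and a subspace meeting $\Pi$ in dimension $k$ corresponds to a form with $k$-dimensional kernel. Since the right-hand side $\ind S(t_0) - \ind S(t_1)$ depends only on the (fixed, non-degenerate) endpoints, and the left-hand side is by definition computed on a general-position representative and is invariant under homotopies rel endpoints, I would first replace $J_{[t_0,t_1]}$ by a convenient representative. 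As $\Delta^\pitchfork \cong Q(\R^n)$ is convex, the curve is homotopic rel endpoints, inside the chart, to the linear segment $s \mapsto (1-s)S(t_0) + s\,S(t_1)$; after a further generic perturbation I may assume that the representative meets the degenerate locus transversally and only at points whose kernel is one-dimensional.

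The core of the argument is then a local crossing analysis. At each crossing time $t_*$ exactly one eigenvalue $\lambda(t)$ of $S(t)$ vanishes, and I would use the eigenvalue perturbation formula $\dot\lambda(t_*) = \bar p^* \dot S(t_*)\bar p$, where $\bar p$ is a unit generator of $\ker S(t_*)$; transversality means $\dot\lambda(t_*) \neq 0$. The key point is to match the sign of this quantity with the canonical co-orientation. Using the identification $T_\Lambda \L(\Sigma) \simeq Q(\Lambda)$ recalled above, the velocity $\dot J(t_*)$ is the quadratic form $z \mapsto p^* \dot S(t_*) p$; on the other hand, the hypersurface $\{\det S = 0\}$ has, at $S(t_*)$, conormal $H \mapsto \bar p^* H \bar p$, because the adjugate of a rank-$(n-1)$ symmetric matrix is proportional to $\bar p\,\bar p^*$. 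Since the positive side of $\mathcal{M}_\Pi$ is by definition the one into which positive-definite forms point, and such forms satisfy $\bar p^* H \bar p > 0$, the crossing at $t_*$ is positive precisely when $\dot\lambda(t_*) > 0$ and negative precisely when $\dot\lambda(t_*) < 0$.

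Finally I would assemble the count. A positive crossing is one at which the vanishing eigenvalue passes from negative to positive values, so $\ind S$ drops by one there; a negative crossing is one at which it passes from positive to negative, so $\ind S$ increases by one. Summing these increments over all crossings in $[t_0,t_1]$ gives $\ind S(t_1) - \ind S(t_0) = \#\{\text{negative crossings}\} - \#\{\text{positive crossings}\}$, which is the opposite of the signed intersection number. Hence $J_{[t_0,t_1]} \cdot \mathcal{M}_\Pi = \#\{\text{positive}\} - \#\{\text{negative}\} = \ind S(t_0) - \ind S(t_1)$, as claimed.

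I expect the main obstacle to be the second paragraph: pinning down, with the correct sign, the compatibility between the abstractly defined canonical co-orientation (via definite quadratic forms in $T_\Lambda \L(\Sigma)$) and the concrete direction in which a simple eigenvalue of $S(t)$ crosses zero. This requires care with the identification of tangent vectors with quadratic forms and with the conormal to the determinant hypersurface at a rank-$(n-1)$ point. Every other step is either a routine homotopy and transversality reduction or the bookkeeping of how $\ind S(t)$ jumps at a simple eigenvalue crossing.
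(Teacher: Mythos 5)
Your argument is correct, and it is essentially the standard proof of this formula; note that the paper itself states this lemma without proof, deferring to the cited references on the Maslov index, so there is no in-paper argument to compare against. Your three steps --- homotoping rel endpoints to a generic path inside the convex chart $\Delta^{\pitchfork}\simeq Q(\R^n)$, matching the canonical co-orientation with the sign of $\dot\lambda(t_*)=\bar p^*\dot S(t_*)\bar p$ at a simple transversal crossing, and bookkeeping the jumps of $\ind S$ --- are exactly the right ones, and the sign conventions are consistent with the paper's identification $\dot\Lambda[z]=p^*\dot S p$ of tangent vectors with quadratic forms.
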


In general the whole curve is not contained in a chart, but we can split it into segments $J_{[\tau_i, \tau_{i+1}]}$, $i=0,\ldots,\ell$, in such a way that $J(\tau)\in \Delta_i^{\pitchfork}$ $\forall\tau\in [\tau_i, \tau_{i+1}]$, where $\Delta_i\cap\Pi=0$, $i=0,\ldots,\ell$. Hence
\begin{equation}
J(\cdot)\cdot\mathcal{M}_{\Pi}=\sum_{i=0}^\ell J_{[\tau_i, \tau_{i+1}]}\cdot\mathcal{M}_{\Pi}.
\end{equation}

\begin{rem}
In particular, if $J(\cdot)$ is a Jacobi curve (which is monotone and ample) then the absolute value of the Maslov index $J_{[t_0,t_1]}\cdot\mathcal{M}_{\mathcal{V}}$ is the number of conjugate times of $J(\cdot)$ counted with multiplicity in the interval $[t_0,t_1]$.
\end{rem}

We finish this section with the two main propositions about the Maslov index we need in the following. The first one provides an estimate of the difference of the Maslov index of a curve with respect to two different trains.
\begin{prop}\label{p:change1}
 Let $J(t)$, $t_0\le t\le t_1$, be a continuous curve in $\mathcal{L}(\Sigma)$ and suppose that $\Pi,\Pi'\in \mathcal{L}(\Sigma)$ satisfy $\Pi \cap J(t_i)=\Pi'\cap J(t_i)=0$, $i=0,1$. Then
\begin{equation}
\lvert J_{[t_0,t_1]}\cdot \mathcal{M}_{\Pi} - J_{[t_0,t_1]}\cdot \mathcal{M}_{\Pi'} \rvert \le n.
\end{equation}
\end{prop}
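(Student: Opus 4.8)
The plan is to reduce the bound to a purely linear-algebraic statement after exploiting two structural features of the Maslov index: its additivity under concatenation and its homotopy invariance. First I would show that the quantity $D := J_{[t_0,t_1]}\cdot\mathcal{M}_\Pi - J_{[t_0,t_1]}\cdot\mathcal{M}_{\Pi'}$ depends only on the endpoints $\Lambda_0 := J(t_0)$ and $\Lambda_1 := J(t_1)$, and not on the curve joining them. The key is that for any \emph{closed} curve $\ell$ one has $\ell\cdot\mathcal{M}_\Pi = \ell\cdot\mathcal{M}_{\Pi'}$: the two trains are carried onto one another by the action of the connected group $\mathrm{Sp}(\Sigma)$ (which acts transitively on $\L(\Sigma)$ and preserves the canonical co-orientation), hence they are homologous and define the same class in $H^1(\L(\Sigma);\mathbb{Z})$, the universal Maslov class. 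Given two curves with the same endpoints, concatenating one with the reverse of the other produces a loop, and this equality together with additivity yields that $D$ is the same for both curves.

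Second, having established path-independence, I would replace $J$ by a convenient curve with the same endpoints. By a Baire argument (each transversality condition is open and dense) I can choose a Lagrangian $\Delta$ transversal to all four subspaces $\Lambda_0,\Lambda_1,\Pi,\Pi'$; since the chart $\Delta^\pitchfork \simeq Q(\R^n)$ is a vector space, I may take the straight segment joining $\Lambda_0$ to $\Lambda_1$, which stays in $\Delta^\pitchfork$. In the coordinates given by the splitting $\Sigma = \Delta\oplus\Pi$, write $\Lambda_i = \{(p,S_i p)\}$ and $\Pi' = \{(p,Cp)\}$ with $S_0,S_1,C$ symmetric; a direct computation gives $\Lambda\cap\Pi' \simeq \ker(S_\Lambda - C)$. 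The coordinate formula for the Maslov index then yields $J_{[t_0,t_1]}\cdot\mathcal{M}_\Pi = \ind S_0 - \ind S_1$, while the same formula applied after the affine change $S\mapsto S-C$ (which does not affect the co-orientation, as $\dot S$ is unchanged) gives $J_{[t_0,t_1]}\cdot\mathcal{M}_{\Pi'} = \ind(S_0 - C) - \ind(S_1 - C)$. Here $S_0, S_1, S_0-C, S_1-C$ are all nondegenerate, precisely by the transversality of the endpoints to $\Pi$ and $\Pi'$.

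The final step is the estimate $|D| = \bigl|[\ind S_0 - \ind(S_0-C)] - [\ind S_1 - \ind(S_1-C)]\bigr| \le n$. I would prove that the function $f(S) := \ind S - \ind(S-C)$ takes values in the interval $[-i_+(C),\, i_-(C)]$ for every nondegenerate symmetric $S$, where $i_\pm(C)$ denote the numbers of positive, resp. negative, eigenvalues of $C$ (so that $\ind = i_-$). Writing $C = C_+ - C_-$ with $C_\pm \ge 0$ of ranks $i_\pm(C)$, the Weyl-type inequalities $\ind(S) \le \ind(S - C_+) \le \ind(S) + i_+(C)$ (subtracting a positive semidefinite matrix of rank $i_+(C)$ raises the index by at most its rank) and $\ind(S-C_+) - i_-(C) \le \ind(S - C_+ + C_-) \le \ind(S - C_+)$ together give $\ind(S - C) \in [\ind S - i_-(C),\ \ind S + i_+(C)]$, hence $f(S) \in [-i_+(C), i_-(C)]$. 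Since $f(S_0)$ and $f(S_1)$ both lie in this interval of integer length $i_+(C)+i_-(C) = \rank C$, we conclude $|D| \le \rank C \le n$.

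The main obstacle is the path-independence step: it is the only place where genuine topology enters, as it relies on the fact that the Maslov class is independent of the reference Lagrangian $\Pi$ (equivalently, that loops pair identically with every train). Once this is in hand the remainder is elementary — a good choice of Darboux chart together with the monotonicity and rank behaviour of the index of a symmetric matrix under additive perturbation. A minor point to handle carefully is the consistency of the canonical co-orientation under the affine shift $S\mapsto S-C$, which follows since the tangent data $\dot S$ is left unchanged.
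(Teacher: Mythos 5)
Your argument is correct. Note first that the paper does not actually prove this proposition: it defers entirely to Propositions 5 and 6 of the cited reference on symplectic methods, so there is no internal proof to measure you against, and what you have written is a legitimate self-contained substitute. Your three steps all check out. The path-independence of $D$ is exactly the statement that the difference of Maslov indices relative to two trains is the H\"ormander index of the quadruple $(\Pi,\Pi';\Lambda_0,\Lambda_1)$, and your justification — the trains are carried one onto the other by the connected group $\mathrm{Sp}(\Sigma)$, which preserves the canonical co-orientation, so they represent the same class pairing with loops — is the standard Arnold argument; the only ingredients you are implicitly using and should state are additivity of the index under concatenation, its sign reversal under orientation reversal, and the fact that the intersection number of a \emph{loop} with a co-oriented pseudo-manifold whose singular locus has codimension at least two is a free-homotopy invariant. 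The replacement of $J$ by the affine segment is legitimate since $\Delta^{\pitchfork}$ is an affine space of symmetric matrices (hence convex) and a $\Delta$ transversal to all four Lagrangians exists as the complement of four proper algebraic hypersurfaces. The identity $J_{[t_0,t_1]}\cdot\mathcal{M}_{\Pi'}=\ind(S_0-C)-\ind(S_1-C)$ is correctly reduced to the paper's coordinate lemma via the symplectic shear $(p,x)\mapsto(p,x-Cp)$, which fixes $\Delta$, maps $\Pi'$ to $\Pi$, acts on the chart by $S\mapsto S-C$, and preserves co-orientations because the identification of tangent vectors with quadratic forms is $\mathrm{Sp}$-equivariant. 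Finally, the inequality $\ind S-\ind(S-C)\in[-i_+(C),\,i_-(C)]$ obtained from $C=C_+-C_-$ and the rank bounds on the inertia index under semidefinite perturbation is correct and even yields the slightly sharper bound $\lvert D\rvert\le\rank C\le n$, where $C$ is the matrix representing $\Pi'$ in the chosen chart.
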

In the second one we consider a continuous curve $P_t$ in $Sp(\Sigma)$, i.e. a one-parameter subgroup of the group $Sp(\Sigma)$ of \textit{symplectic transformations} of $\Sigma$ and we estimate the difference of the indices between two curves generated by $P_t$ with respect to the same train.
\begin{prop}\label{p:change2}
Let $P_t\in Sp(\Sigma)$, $t_0\le t \le t_1$ be a continuous curve in $Sp(\Sigma)$, $P_{t_0}=\mathbb{I}$, and suppose $\Lambda,\Lambda^\prime\in \mathcal{L}(\Sigma)$. Set $J(t)=P_{t}\Lambda$ and $J^\prime(t)=P_{t}\Lambda^\prime$. Then, for all $\Pi \in L(\Sigma)$ such that $\Pi\cap J(t_i)=\Pi\cap J^\prime(t_i)=0$, $i=0,1$, the following inequality holds
\begin{equation}
\lvert J_{[t_0,t_1]}\cdot \mathcal{M}_{\Pi} - J^\prime_{[t_0,t_1]}\cdot \mathcal{M}_{\Pi} \rvert \le n.
\end{equation}
\end{prop}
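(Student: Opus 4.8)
The plan is to reduce the statement to Proposition~\ref{p:change1} (change of train for a fixed curve) by means of a two-parameter homotopy. Since $\L(\Sigma)$ is connected, I first choose a continuous path $\sigma:[0,1]\to\L(\Sigma)$ with $\sigma(0)=\Lambda$ and $\sigma(1)=\Lambda'$; no transversality is required along $\sigma$, only at its fixed endpoints. I then consider the square $[t_0,t_1]\times[0,1]$ together with the homotopy $H(t,s)=P_t\sigma(s)$. Its four sides are: the curve $J(t)=P_t\Lambda$ (bottom, $s=0$); the curve $J'(t)=P_t\Lambda'$ (top, $s=1$); the path $s\mapsto P_{t_0}\sigma(s)=\sigma(s)$ (left, $t=t_0$, using $P_{t_0}=\mathbb{I}$); and the path $s\mapsto P_{t_1}\sigma(s)$ (right, $t=t_1$). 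The four corners are $\Lambda,\Lambda',J(t_1)=P_{t_1}\Lambda,J'(t_1)=P_{t_1}\Lambda'$, all transversal to $\Pi$ by hypothesis, so each of the four sides is a curve with endpoints off the train $\mathcal{M}_\Pi$ and therefore has a well-defined Maslov index.

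The boundary of the square, traversed as $\partial([t_0,t_1]\times[0,1])$, is a null-homotopic loop in $\L(\Sigma)$, since it bounds the image of $H$. The Maslov index of a contractible loop vanishes, so additivity of the intersection number under concatenation and its change of sign under reversal give
\begin{equation}
J_{[t_0,t_1]}\cdot\mathcal{M}_\Pi - J'_{[t_0,t_1]}\cdot\mathcal{M}_\Pi = \mu_0 - \mu_1,
\end{equation}
where $\mu_0:=\sigma\cdot\mathcal{M}_\Pi$ is the index of the left side and $\mu_1:=(P_{t_1}\sigma)\cdot\mathcal{M}_\Pi$ that of the right side.

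Next I would exploit the symplectic invariance of the Maslov index. Every $R\in Sp(\Sigma)$ acts as a diffeomorphism of $\L(\Sigma)$ sending $\mathcal{M}_\Pi$ to $\mathcal{M}_{R\Pi}$ and preserving the canonical co-orientation, because the latter is defined through the identification $T_\Lambda\L(\Sigma)\simeq Q(\Lambda)$, $z\mapsto\omega(z,\dot z)$, which is manifestly $Sp(\Sigma)$-equivariant and carries positive (negative) definite forms to positive (negative) definite forms. Applying $R=P_{t_1}^{-1}$ to the right side yields $\mu_1=(P_{t_1}\sigma)\cdot\mathcal{M}_\Pi=\sigma\cdot\mathcal{M}_{\Pi'}$ with $\Pi':=P_{t_1}^{-1}\Pi$, hence
\begin{equation}
J_{[t_0,t_1]}\cdot\mathcal{M}_\Pi - J'_{[t_0,t_1]}\cdot\mathcal{M}_\Pi = \sigma\cdot\mathcal{M}_\Pi - \sigma\cdot\mathcal{M}_{\Pi'}.
\end{equation}
The endpoints $\Lambda,\Lambda'$ of $\sigma$ are transversal to $\Pi$ (the hypothesis at $t_0$) and to $\Pi'=P_{t_1}^{-1}\Pi$, the latter being equivalent to $J(t_1),J'(t_1)$ transversal to $\Pi$ (the hypothesis at $t_1$). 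Thus Proposition~\ref{p:change1} applies to the single curve $\sigma$ with the two trains $\mathcal{M}_\Pi,\mathcal{M}_{\Pi'}$ and bounds the right-hand side by $n$, which is exactly the desired estimate.

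The step I expect to be the main obstacle is the first one: justifying rigorously that the total intersection number around the boundary of the square vanishes. Unlike the endpoint-fixing homotopy invariance already quoted, here the endpoints genuinely move along $\sigma$ and $P_{t_1}\sigma$, so one cannot invoke that invariance directly; instead one must argue that the boundary is a contractible loop and that the Maslov index of a contractible loop is zero. This in turn rests on $\mathcal{M}_\Pi$ being a co-oriented pseudo-manifold whose singular locus (of codimension three) is generically avoided by the $2$-disk filling the loop, so that the algebraic count of signed crossings is a well-defined homotopy invariant. Once this additivity-around-a-square identity is established, the $Sp(\Sigma)$-equivariance of the co-orientation and the appeal to Proposition~\ref{p:change1} are routine.
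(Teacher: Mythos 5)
Your proposal is correct, but note that the paper does not actually prove Proposition~\ref{p:change2}: it defers both Propositions~\ref{p:change1} and~\ref{p:change2} to the cited reference, so there is no in-paper argument to compare against. What you give is a legitimate self-contained reduction of Proposition~\ref{p:change2} to Proposition~\ref{p:change1}, and it uses only facts the paper itself sets up: connectedness of $\mathcal{L}(\Sigma)$, additivity of the intersection number under concatenation and its sign change under reversal, the vanishing of the index on null-homotopic loops (which, as you say, rests on $\mathcal{M}_\Pi$ being a co-oriented pseudo-manifold whose singular locus has codimension at least three in $\mathcal{L}(\Sigma)$, so a generic filling disk misses it and meets the smooth stratum transversally in a compact $1$-manifold whose signed boundary count is zero), and the $Sp(\Sigma)$-equivariance of the co-orientation, which your computation $\omega(Rz,R\dot z)=\omega(z,\dot z)$ establishes. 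The hypothesis check at the end is also right: transversality of $\Lambda,\Lambda'$ to $\Pi$ and to $\Pi'=P_{t_1}^{-1}\Pi$ is exactly the stated transversality of $J(t_i),J'(t_i)$ to $\Pi$ for $i=0,1$, so Proposition~\ref{p:change1} applies to the connecting path $\sigma$ and yields the bound $n$. The one point worth making fully explicit in a write-up is the step you yourself flag: the boundary of the square must first be put in general position relative to the four corner points (all off the train by hypothesis) before the disk is perturbed, so that each side has a well-defined index and the four indices sum to the index of the loop. With that said, the argument is complete and is in the same spirit as the source the paper cites, where this estimate is likewise obtained from the change-of-train estimate by a homotopy argument.
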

The proofs of Propositions~\ref{p:change1} and~\ref{p:change2} can be found in \cite[Propositions 5, 6]{SympMeth}.

\section{Main results}\label{s:main}

We start this section by defining, more precisely, the class of dynamical systems under investigation. Let $(\Sigma,\sigma)$ be a symplectic vector space.
\begin{definition}\label{d:LQ}
A LQ optimal control problem is a pair $(H,\ve)$, where $H : \Sigma \to \R$ is a quadratic form (the Hamiltonian) and $\ve \subset \Sigma$ is a Lagrangian subspace, such that $H|_\ve \geq 0$.
\end{definition}
By choosing appropriate Darboux coordinates, $\Sigma = \R^{2n}$, $\omega = \J$, $\ve = \{(p,0)|\,p \in \R^{n}\}$ and the Hamiltonian is
\begin{equation}\label{eq:Hamiltonian2}
H(p,x) = \frac{1}{2}(p,x)^* \H \begin{pmatrix}
p \\ x
\end{pmatrix}, \qquad \H = \begin{pmatrix}
BB^* & A \\ A^* & Q
\end{pmatrix}.
\end{equation}
Thus, Definition~\ref{d:LQ} is a coordinate-free characterization of the systems introduced in Sec.~\ref{s:intro}. 
With the pair $(H,\ve)$ we associate the Jacobi curve $J(t) = e^{t\J\H}\ve$, which is a smooth curve in the Lagrange Grassmannian $\L(\Sigma)$. The assumption $H|_\ve \geq 0$ is equivalent to the monotonicity of $J(\cdot)$.
\begin{lem}
The Jacobi curve of the system $(H,\ve)$ is monotone and equiregular.
\end{lem}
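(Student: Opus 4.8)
The plan is to verify the two properties separately, both by exploiting that $J(t)=e^{t\J\H}\ve$ is the orbit of the fixed Lagrangian $\ve$ under the one-parameter group $e^{t\J\H}$. Write $M:=\J\H$ for brevity, so that $J(t)=e^{tM}\ve$ and $e^{sM}J(t)=J(t+s)$.

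For monotonicity I would compute the tangent vector $\dot J(t)\in Q(J(t))$ directly from the definition $\dot\Lambda\colon z\mapsto\omega(z,\dot z)$. Given $z\in J(t)$, the flow line $z(\tau)=e^{\tau M}w$ with $w=e^{-tM}z\in\ve$ is a smooth extension satisfying $z(t)=z$ and $\dot z(t)=Mz$. Hence
\begin{equation}
\dot J(t)[z]=\omega(z,Mz)=z^*\J(\J\H)z=z^*\J^2\H z=-z^*\H z=-2H(z),
\end{equation}
using $\J^2=-\mathbb{I}$. Now $H$ is constant along the Hamiltonian flow — equivalently $\H\J\H$ is skew-symmetric, so $z^*\H M z=z^*\H\J\H z=0$ — whence $H(z)=H(e^{-tM}z)=H(w)$. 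Therefore, under the linear isomorphism $e^{-tM}\colon J(t)\to\ve$, the form $\dot J(t)$ is carried to $-2H|_\ve$, which is non-positive by the standing assumption $H|_\ve\geq 0$. Thus $\dot J(t)\leq 0$ for every $t$ and the curve is monotone (non-increasing); running the same identity at $t=0$ backwards shows conversely that monotonicity forces $H|_\ve$ to be semidefinite, giving the asserted equivalence.

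For equiregularity I would show that each extension is transported by the group, namely $J^{(i)}(t)=e^{tM}J^{(i)}(0)$. Writing $s=t+\tau$ and $\ell(t+\tau)=e^{tM}m(\tau)$ with $m(\tau):=e^{-tM}\ell(t+\tau)\in J(\tau)$, one has $\tfrac{d^j}{d\tau^j}\ell(t+\tau)\big|_{\tau=0}=e^{tM}\,\tfrac{d^j}{d\tau^j}m(\tau)\big|_{\tau=0}$, since $e^{tM}$ is a fixed linear map and commutes with differentiation; as $\ell$ ranges over all smooth sections of $J(\cdot)$ near $t$, $m$ ranges over all smooth sections near $0$ with $m(0)\in\ve$, so the corresponding spans agree after applying $e^{tM}$. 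A short computation of the same type (expand $\ell(\tau)=e^{\tau M}v(\tau)$ with $v(\tau)\in\ve$ and let the jet of $v$ at $0$ be arbitrary) identifies the base point as $J^{(i)}(0)=\spn\{\ve,M\ve,\dots,M^i\ve\}$. Since $e^{tM}$ is invertible, $\dim J^{(i)}(t)=\dim J^{(i)}(0)$ is independent of $t$; in particular it is locally constant for every $i$, which is exactly equiregularity.

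The computations are short and structural, so I do not expect a genuine obstacle; the two points requiring care are the flow-invariance of $H$ in the monotonicity step (this is what lets one replace the a priori $t$-dependent form $\dot J(t)$ by the fixed form $H|_\ve$, up to the isomorphism $e^{-tM}$) and the equivariance of the extension construction under the one-parameter group in the equiregularity step. Keeping track of signs and of the convention $\omega(a,b)=a^*\J b$ is the only real bookkeeping; note in particular that at $t=0$ the identity gives $\dot S(0)=-BB^*\le 0$, consistent with $H|_\ve(p,0)=\tfrac12 p^*BB^*p\ge 0$.
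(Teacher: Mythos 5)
Your proof is correct and follows essentially the same route as the paper: both compute $\dot J(t)[z]=\omega(z,\J\H z)=-z^*\H z$ and pull the form back to $\ve$ along the flow (you via conservation of $H$, the paper via the symplectomorphism property — the same identity), and both obtain equiregularity from $J^{(i)}(t)=e^{t\J\H}J^{(i)}(0)$. The extra observations (the converse equivalence with $H|_\ve\ge 0$, the formula $J^{(i)}(0)=\spn\{\ve,\J\H\ve,\dots,(\J\H)^i\ve\}$) are correct but not needed.
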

\begin{proof}
Let $z \in J(t)$, then there exists $z_0 \in \ve$ such that $z = e^{t\J\H}z_0$. The last formula also provides a smooth extension of $z$ belonging to the Jacobi curve for times close to $t$. Then, by definition of the quadratic form $\dot{J}(t)$, we obtain
\begin{equation}
\dot{J}(t)[z] = \omega(z,\dot{z}) = \omega\left(e^{t\J\H}z_0,e^{t\J\H}\J\H z_0\right) = \omega(z_0,\J\H z_0) = -z_0^* BB^* z_0 \leq 0,
\end{equation}
where we have used the fact that the Hamiltonian flow is a one-parameter group of symplectomorphisms. This proves that $\dot{J}(t) \leq 0$ as a quadratic form and the curve is monotone.

Now observe that $J(t+\eps) =  e^{t\J\H} J(\eps)$. This imples, by definition of $i$-th extension, that
\begin{equation}
J^{(i)}(t)  = e^{t\J\H} J^{(i)}(0), \qquad i \geq 0,
\end{equation}
hence the $i$-th extensions have the same dimension for all $t$, and the curve is equiregular. 
\end{proof}
\begin{lem}
The system $(H,\ve)$ is controllable if and only if the Jacobi curve $J(\cdot)$ is ample.
\end{lem}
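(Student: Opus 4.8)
The plan is to translate ampleness into a purely algebraic statement about the matrix $M:=\J\H$ and then recognise the Kalman controllability condition inside it. First I would invoke the previous lemma: since $J(\cdot)$ is equiregular and $J(t)=e^{t\J\H}J(0)$ yields $J^{(i)}(t)=e^{t\J\H}J^{(i)}(0)$, every extension has $t$-independent dimension, so the curve is ample at some $t$ iff it is ample at every $t$; it therefore suffices to test ampleness at $t=0$. Taking the constant extension $\ell(t)=e^{t\J\H}z_0$ with $z_0\in\ve$ shows $\tfrac{d^j}{dt^j}\ell(0)=M^{j}z_0$, whence
\begin{equation}
J^{(i)}(0)=\sum_{j=0}^{i}M^{j}\ve,\qquad M=\J\H=\begin{pmatrix} A^* & Q\\ -BB^* & -A\end{pmatrix}.
\end{equation}
Thus ampleness is equivalent to $W:=\sum_{j\ge0}M^{j}\ve=\R^{2n}$, i.e.\ to the statement that the smallest $M$-invariant subspace containing $\ve$ is all of $\Sigma$.

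Next I would compute this invariant subspace by induction, using $M(p,x)=(A^*p+Qx,\,-BB^*p-Ax)$ and $\ve=\R^n\oplus\{0\}$. Let $\mathcal{C}_k:=\sum_{i=0}^{k-1}A^{i}B\,\R^k$ denote the span of the first $k$ Kalman blocks, with $\mathcal{C}_0=0$. The claim is
\begin{equation}
\sum_{j=0}^{k}M^{j}\ve=\R^n\oplus\mathcal{C}_k .
\end{equation}
The base case is $\ve=\R^n\oplus\{0\}$. For the inductive step, applying $M$ to $\R^n\oplus\mathcal{C}_k$ gives in the $x$-component exactly $-BB^*p-Ax$ with $p\in\R^n$ and $x\in\mathcal{C}_k$; since $BB^*\,\R^n=B\,\R^k$ and $A\mathcal{C}_k=\sum_{i=1}^{k}A^{i}B\,\R^k$, this contributes $B\,\R^k+A\mathcal{C}_k$, and together with the already-present $\mathcal{C}_k$ it equals $\mathcal{C}_{k+1}$. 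The decisive bookkeeping point is that $\ve=\R^n\oplus\{0\}$ sits inside every partial sum, so the $p$-component is free at each stage; consequently the terms $A^*p+Qx$ and $-BB^*p$ are harmless, landing in the part already constructed, and only $A$ acting on $\mathcal{C}_k$ enlarges the flag. Passing to the limit gives $W=\R^n\oplus\mathcal{C}$ with $\mathcal{C}=\sum_{i\ge0}A^{i}B\,\R^k$ the controllable subspace.

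Finally, $W=\R^{2n}$ if and only if $\mathcal{C}=\R^n$, which (using Cayley--Hamilton, so the sum stabilises at $i=n-1$) is precisely the controllability of $(A,B)$; combined with the reduction above, this proves the lemma. I expect the main obstacle to be the clean bookkeeping of the induction: verifying that the $Q$-dependent and $-BB^*p$ terms never escape the already-constructed flag, so that the growth of the extensions is governed solely by the Kalman flag in the $x$-component. As an independent cross-check one may instead compute the symplectic orthogonal: since $M$ is infinitesimally symplectic ($M^*\J+\J M=0$), one finds $W^{\angle}=\{w:M^{j}w\in\ve\ \forall j\ge0\}$, the largest $M$-invariant subspace contained in $\ve$, and a direct calculation identifies it with $\mathcal{C}^{\perp}\oplus\{0\}$; as $\dim W+\dim W^{\angle}=2n$, this gives the same conclusion.
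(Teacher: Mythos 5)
Your proof is correct, and it reaches the same endpoint as the paper (the Kalman rank condition) by a slightly different computational route. Both arguments first use equiregularity, via $J^{(i)}(t)=e^{t\J\H}J^{(i)}(0)$, to reduce ampleness to the single time $t=0$. From there the paper works in the coordinate chart, writes $J(t)=\{(p,S(t)p)\}$ with $S(t)=\phi_{21}(t)\phi_{11}(t)^{-1}$, and asserts (without detail) the identity $\rank\{\dot S(0),\dots,S^{(m-1)}(0)\}=\rank\{B,AB,\dots,A^{m-1}B\}$; you instead stay coordinate-free, identify $J^{(i)}(0)=\sum_{j=0}^{i}(\J\H)^{j}\ve$, and compute this flag by induction as $\R^n\oplus\mathcal{C}_i$ with $\mathcal{C}_i$ the Kalman flag. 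The two identities are equivalent (when $S(0)=0$ the images of the derivatives of $S$ span exactly the $x$-component of the extensions), but your version avoids differentiating the quotient $\phi_{21}\phi_{11}^{-1}$ and makes transparent \emph{why} the $Q$- and $A^*$-terms are irrelevant: they land in the $p$-component, which already contains all of $\ve$. Your symplectic-orthogonal cross-check ($W^\angle$ being the largest $\vec H$-invariant subspace of $\ve$) is a nice bonus not present in the paper. Two cosmetic points: your symbol $k$ is overloaded (number of controls versus induction index), and when you write $J^{(i)}(0)=\sum_{j\le i}M^{j}\ve$ you should note that a general smooth section $e^{tM}z(t)$, $z(t)\in\ve$, contributes nothing beyond the constant ones by the Leibniz rule, so that the inclusion $\subseteq$ also holds. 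Neither affects the validity of the argument.
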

\begin{proof}
By definition, the system $(H,\ve)$, which can be written as in Eq.~\eqref{eq:Hamiltonian2}, is controllable if 
\begin{equation}
\rank(B,AB,\ldots,A^{m-1}B) = n.
\end{equation}
It is sufficient to prove that this is equivalent to ampleness at $t=0$, since ampleness at all $t$ follows from the equiregularity of the curve. Indeed, for small $t$, $J(t) = \{(p,S(t)p)|\,p \in \R^n\}$. We explicitly compute $S(t)$ as follows. Observe that
\begin{equation}
J(t) = e^{t\J\H}\begin{pmatrix} p \\ 0 \end{pmatrix} = \begin{pmatrix}
\phi_{11}(t) & \phi_{12}(t) \\ \phi_{21}(t) & \phi_{22}(t)
\end{pmatrix} \begin{pmatrix} p \\ 0 \end{pmatrix}, \qquad p \in \R^n.
\end{equation}
It is clear that $S(t) = \phi_{21}(t) \phi_{11}(t)^{-1}$. Then we can compute iteratively the derivatives of $S(t)$ at $t=0$, and we obtain, for any $m >0$
\begin{equation}
\rank\{\dot{S}(0),\ddot{S}(0),\ldots,S^{m-1}(0)\} = \rank\{B,AB,\ldots,A^{m-1}B\}.
\end{equation}
Therefore controllability is equivalent to ampleness of the curve at $t=0$.
\end{proof}

We employ the symbol $\HH$ to denote the set of controllable dynamical systems $(H,\ve)$ or, with no risk of confusion, the associated Hamiltonian vector fields $\vec{H}$. Since the associated Jacobi curve is monotone, ample and equiregular, Lemma~\ref{l:ampletrasv} and Corollary~\ref{c:ampletrasv} apply. This has important consequences on conjugate times.

\begin{definition}
We say that $\Gamma\subset \Sigma$ is an $\vec{H}$-invariant subspace if $P_t( \Gamma )= \Gamma$ for all $t \in \R$.
\end{definition}

\begin{prop} \label{lem_lagrangian_invariant_no_conj_points}
Let $\vec{H} \in \HH$. Suppose there exists an $\vec{H}$-invariant Lagrangian subspace $\Gamma \subset \Sigma$, then the Jacobi curve $J(\cdot)$ has no conjugate times.
\end{prop}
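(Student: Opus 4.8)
The plan is to reduce the statement to Corollary~\ref{c:ampletrasv}, which says that a monotone, ample curve avoiding a fixed Lagrangian subspace has no self-intersections. Since $J(0) = \ve$, a time $t\neq 0$ is conjugate exactly when $J(t)\cap J(0)\neq 0$, i.e.\ when the Jacobi curve self-intersects. Hence it suffices to produce a single Lagrangian subspace transversal to the whole curve $J(\cdot)$, and the natural candidate is the invariant subspace $\Gamma$ itself. Because $\Gamma$ is $\vec{H}$-invariant we have $e^{t\J\H}\Gamma = \Gamma$, so from $J(t)=e^{t\J\H}\ve$ we get $J(t)\cap\Gamma = e^{t\J\H}(\ve\cap\Gamma)$, a subspace whose dimension equals the constant $\dim(\ve\cap\Gamma)$. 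Thus everything reduces to the claim $\ve\cap\Gamma = 0$, which is the heart of the argument and the only place where controllability is used.

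To prove this claim I would take $v=(p,0)\in\ve\cap\Gamma$ and exploit that $\Gamma$ is \emph{both} Lagrangian and invariant. Invariance gives $\vec{H}v\in\Gamma$, so the Lagrangian condition $\omega|_\Gamma\equiv 0$ forces $\omega(v,\vec{H}v)=0$. A direct computation with $\vec{H}=\left(\begin{smallmatrix} -A^* & -Q \\ BB^* & A\end{smallmatrix}\right)$ yields $\omega(v,\vec{H}v)=p^*BB^*p=\|B^*p\|^2$, hence $B^*p=0$. But then $\vec{H}v=(-A^*p,0)$ again lies in $\ve\cap\Gamma$, and applying the same reasoning iteratively to $(-A^*)^j p$ gives $B^*(A^*)^j p=0$ for every $j$; equivalently $p$ is orthogonal to the columns of $A^jB$ for all $j$. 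Controllability, $\rank(B,AB,\ldots,A^{m-1}B)=n$, makes these vectors span $\R^n$, so $p=0$ and therefore $\ve\cap\Gamma=0$.

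With $\ve\cap\Gamma=0$ established, the previous paragraph gives $J(t)\cap\Gamma = e^{t\J\H}(\ve\cap\Gamma)=0$ for all $t$. Since the Jacobi curve is monotone and ample and now avoids the fixed Lagrangian subspace $\Gamma$, Corollary~\ref{c:ampletrasv} applies and yields $J(t_1)\cap J(t_2)=0$ for all $t_1\neq t_2$. In particular $J(t)\cap\ve = J(t)\cap J(0)=0$ for every $t\neq 0$, so there are no conjugate times. I expect the genuine obstacle to be the middle step: recognizing that invariance plus the Lagrangian property produces the orthogonality $\omega(v,\vec{H}v)=0$ and that iterating it reconstructs exactly the controllability matrix. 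The outer reduction to Corollary~\ref{c:ampletrasv} and the constancy of $\dim(J(t)\cap\Gamma)$ are then routine.
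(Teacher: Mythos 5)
Your proof is correct, and its outer skeleton matches the paper's: establish $J(t)\cap\Gamma=0$ for all $t$, then invoke Corollary~\ref{c:ampletrasv} to exclude self-intersections of the Jacobi curve, hence intersections with $J(0)=\ve$. Where you diverge is in how transversality to $\Gamma$ is obtained. The paper gets it ``softly'' from Lemma~\ref{l:ampletrasv}: intersections of a monotone ample curve with any fixed Lagrangian subspace occur only at a discrete set of times, while invariance of $\Gamma$ forces $\dim(J(t)\cap\Gamma)=\dim(e^{t\J\H}(\ve\cap\Gamma))$ to be constant in $t$, so the intersection must vanish identically. You instead prove $\ve\cap\Gamma=0$ by a direct computation: $\omega(v,\vec H v)=\|B^*p\|^2$ for $v=(p,0)\in\ve\cap\Gamma$, isotropy plus invariance kill it, and iterating reconstructs the Kalman rank condition. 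Both are valid; the paper's version is shorter, coordinate-free, and reuses machinery already in place (at the cost of leaving the ``constant dimension plus discreteness'' step implicit), while yours is self-contained, pinpoints exactly where controllability enters, and in fact proves the slightly stronger statement that \emph{any} $\vec H$-invariant isotropic subspace meets $\ve$ trivially -- only the final appeal to Corollary~\ref{c:ampletrasv} uses that $\Gamma$ is actually Lagrangian, which is consistent with the paper's remark that the proposition fails for merely isotropic invariant subspaces.
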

\begin{proof}
Indeed, by Lemma~\ref{l:ampletrasv}, the Jacobi curve remains transversal to $\Gamma$ for all times. Then, by Corollary~\ref{c:ampletrasv}, the only intersection with $\ve = J(0)$ can occur at $t=0$.
\end{proof}
Notice that the Lagrangian hypothesis is crucial. Indeed, Proposition~\ref{lem_lagrangian_invariant_no_conj_points} is false if the $\vec{H}$-invariant subspace is simply isotropic.

\subsection{Proof of the main result}
Now we are ready to prove Theorem~\ref{t:main}. By ``eigenvalues of the Hamiltonian'' we will mean the eigenvalues of $\J\H$, that is the matrix representing the Hamiltonian vector field $\vec{H}$. The proof is based on the following steps:
\begin{itemize}
\item[(i)] Assuming $\vec{H}$ diagonalizable, with pure imaginary spectrum, there are infinitely many conjugate times (Proposition~\ref{prop_pure_imaginary_spectrum});
\item[(ii)] Assuming $\vec{H}$ diagonalizable, with at least one pure imaginary eigenvalue, there are infinitely many conjugate times (Proposition~\ref{prop_at_least_one_pure_imag}).
\item[(iii)] For a general $\vec{H}$, with at least one Jordan block of odd order corresponding to a pure imaginary eigenvalue, there are infinitely many conjugate times (Proposition~\ref{prop_oddjordan}).
\end{itemize}
\begin{itemize}
\item[(iv)] For a general $\vec{H}$, if all Jordan blocks corresponding to pure imaginary eigenvalues are of even order, there are no conjugate times (Proposition~\ref{prop_noconjtimes}).
\end{itemize}
We directly prove (i). Then, with the techniques of Sec.~\ref{s:red}, we reduce (ii) and (iii) to the ``extremal'' case (i).

We start by recalling an important property of the spectrum of Hamiltonian matrices as $\vec{H}$. If $\lambda$ is an eigenvalue, then also $\pm\lambda, \pm \bar{\lambda}$ are eigenvalues with the same multiplicity, where the bar denotes complex conjugation. Then, eigenvalues always appear in pairs (if $\lambda = \beta$ or $\lambda = i\beta$ for $\beta \in \R$) or in quadruples otherwise.

We denote by $E_{\lambda} \subseteq \R^{2n}$ the real invariant subspace corresponding to the eigenvalues $\lambda,\bar{\lambda}$ of $\vec{H}$. This is the real vector space generated by the generalized eigenvectors $\xi$, $\bar{\xi}$ corresponding to the eigenvalues $\lambda$ and $\bar\lambda$, respectively. More precisely
\begin{equation}
E_{\lambda}:= \spn\{u,v \in \R^{2n} |\, u + iv \in \ker(\vec{H}-\lambda\mathbb{I})^k,\,k \geq 0\}.
\end{equation}
It is clear that $E_{\lambda} = E_{\bar{\lambda}}$. 
\begin{lem} \label{lem_H_orthog}
Let $\lambda$ and $\lambda^\prime$ be eigenvalues of $\vec{H}$ (not necessarily distinct). If $\lambda + \lambda^\prime \neq 0$ and $\bar\lambda + \lambda^\prime \neq 0$ then $E_{\lambda}\J E_{\lambda^\prime}=0$.
\end{lem}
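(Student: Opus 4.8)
The plan is to exploit the fact that $\vec H=-\J\H$ is an infinitesimally symplectic (Hamiltonian) matrix, so that its adjoint with respect to $\omega$ is $-\vec H$, and then to deduce the $\omega$-orthogonality of generalized eigenspaces whose eigenvalues sum to something nonzero. First I would record the algebraic identity $\vec H^*\J+\J\vec H=0$, which follows at once from $\H^*=\H$ and $\J^*=-\J=\J^{-1}$: indeed $\vec H^*=(-\J\H)^*=\H\J$, so $\vec H^*\J=-\H$ while $\J\vec H=\H$. Equivalently, writing $\omega(u,v)=u^*\J v$, this says $\omega(\vec H u,v)=-\omega(u,\vec H v)$ for all $u,v$; that is, the $\omega$-adjoint of $\vec H$ is $-\vec H$, and hence for every polynomial $p$ one has $\omega(p(\vec H)u,v)=\omega(u,p(-\vec H)v)$.

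Next I would pass to the complexification $\mathbb{C}^{2n}$, extending $\omega$ to the complex-bilinear form $\omega(u,v)=u^\top\J v$, and denote by $E^{\mathbb{C}}_\mu=\bigcup_k\ker(\vec H-\mu\mathbb{I})^k$ the complex generalized eigenspace of $\vec H$ at an eigenvalue $\mu$. The key claim is that $\omega(E^{\mathbb{C}}_\mu,E^{\mathbb{C}}_\nu)=0$ whenever $\mu+\nu\neq 0$. To prove it, fix $u\in\ker(\vec H-\mu\mathbb{I})^k$ and apply the polynomial identity with $p(t)=(t-\mu)^k$, which gives $0=\omega(p(\vec H)u,v)=(-1)^k\omega\!\left(u,(\vec H+\mu\mathbb{I})^k v\right)$ for all $v$. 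On $E^{\mathbb{C}}_\nu$ the operator $\vec H+\mu\mathbb{I}$ acts as $(\mu+\nu)\mathbb{I}$ plus a nilpotent, so it is invertible precisely because $\mu+\nu\neq 0$; therefore $(\vec H+\mu\mathbb{I})^k$ maps $E^{\mathbb{C}}_\nu$ bijectively onto itself, and we conclude $\omega(u,w)=0$ for every $w\in E^{\mathbb{C}}_\nu$. Since $u$ was an arbitrary generalized eigenvector for $\mu$, the claim follows.

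Finally I would assemble the real statement. The real subspace $E_\lambda$ complexifies to $E^{\mathbb{C}}_\lambda\oplus E^{\mathbb{C}}_{\bar\lambda}$, and likewise $E_{\lambda'}$ to $E^{\mathbb{C}}_{\lambda'}\oplus E^{\mathbb{C}}_{\bar{\lambda'}}$; moreover $\omega(u,v)=u^*\J v$ vanishes on the real spaces iff its complex-bilinear extension vanishes on their complexifications (the extension is determined by its values on a real basis). It therefore suffices to check the four pairings $\omega(E^{\mathbb{C}}_\alpha,E^{\mathbb{C}}_\beta)$ with $\alpha\in\{\lambda,\bar\lambda\}$, $\beta\in\{\lambda',\bar{\lambda'}\}$, whose eigenvalue sums are $\lambda+\lambda'$, $\lambda+\bar{\lambda'}$, $\bar\lambda+\lambda'$, and $\bar\lambda+\bar{\lambda'}=\overline{\lambda+\lambda'}$. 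Using $\lambda+\bar{\lambda'}=\overline{\bar\lambda+\lambda'}$, all four sums are nonzero exactly when $\lambda+\lambda'\neq 0$ and $\bar\lambda+\lambda'\neq 0$, which are the hypotheses of the lemma. By the claim each pairing then vanishes, so $E_\lambda\J E_{\lambda'}=0$.

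The core linear-algebra step (generalized-eigenspace $\omega$-orthogonality via the adjoint identity and the invertibility of $(\vec H+\mu\mathbb{I})^k$ on $E^{\mathbb{C}}_\nu$) is routine; the step requiring the most care is the last one, namely correctly relating the real bilinear form $u^*\J v$ to its complex-bilinear extension on the complexification and reducing the four complex non-vanishing conditions to the two real hypotheses by complex conjugation.
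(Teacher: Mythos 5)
Your proof is correct, and it in fact does more than the paper's own argument. The paper proves the lemma only under the simplifying assumption that $\vec{H}$ is diagonalizable: it pairs two eigenvectors $\xi,\xi'$ through the symmetric matrix $\H=\J\vec{H}$ to obtain $(\lambda+\lambda')\,\xi^*\J\xi'=0$ (and the analogous identity with $\bar\xi$), and then simply cites Meyer--Hall for the non-diagonalizable case. Your identity $\omega(\vec{H}u,v)=-\omega(u,\vec{H}v)$ is exactly the coordinate-free content of that symmetry of $\H$, so on eigenvectors the two arguments coincide; but by upgrading it to the polynomial identity $\omega(p(\vec{H})u,v)=\omega(u,p(-\vec{H})v)$ and exploiting the invertibility of $(\vec{H}+\mu\mathbb{I})^k$ on $E^{\mathbb{C}}_\nu$ when $\mu+\nu\neq0$, you obtain the statement for generalized eigenspaces directly --- precisely the case the paper outsources to a reference. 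The complexification bookkeeping at the end (the four pairings collapsing, via complex conjugation, to the two stated hypotheses) is also handled correctly; the only minor point worth noting is that when $\lambda$ is real the complexification of $E_\lambda$ is just $E^{\mathbb{C}}_\lambda$ rather than a sum of two spaces, which only shortens the verification. In short: same underlying mechanism as the paper, but a self-contained proof of the full statement rather than of its diagonalizable special case.
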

\begin{proof}
For simplicity, we prove the theorem assuming $\vec{H}$ to be diagonalizable. Recall that $\vec{H} = -\J\H$ and $\Omega^2 = -\mathbb{I}$. Let $\xi$ and $\xi^\prime$ be eigenvectors corresponding to $\lambda$ and $\lambda^\prime$ respectively. Since $\J^2 = -\mathbb{I}$, we have $\xi^\prime\mathbf{H}\xi=\lambda\xi^\prime\J\xi$ and $\xi\mathbf{H}\xi^\prime=\lambda^\prime\xi\J\xi^\prime$ so $(\lambda+\lambda^\prime)\xi\J\xi^\prime=0$. Analogously, we obtain $\xi'H \bar{\xi} = \bar{\lambda} \xi' \J \bar\xi$ and $\bar{\xi}H\xi' = \lambda'\bar\xi\J\xi'$. Then $(\bar\lambda+\lambda^\prime)\bar\xi\J\xi^\prime=0$. Since $\lambda + \lambda^\prime \ne 0$ and $\bar\lambda + \lambda^\prime \ne 0$ it follows that $E_{\lambda}\J E_{\lambda^\prime}=0$.
The above result still holds if $\vec{H}$ is not diagonalizable (see \cite[Lemma D.1, Chapter II]{MeyerHall}).
\end{proof}
\begin{rem}
In particular if $\lambda=\alpha+i\beta$, with $\alpha\ne 0$ then $\J\rvert_{E_\lambda}\equiv 0$, i.e. $E_{\alpha+i\beta}$ is isotropic if $\alpha \neq 0$.
\end{rem}
It follows that the invariant subspaces associated with purely imaginary eigenvalues, non-purely imaginary eigenvalues, and $E_0$ are pairwise $\J$-orthogonal. This, together with the non-degeneracy of $\J$, implies the following decomposition in $\J$-orthogonal symplectic subspaces
\begin{equation}
\R^{2n} =  \underbrace{E_0 \oplus \left(\bigoplus_{\alpha\neq 0} E_{\alpha+i\beta}\right)}_{\text{non pure imaginary}} \oplus\underbrace{\left( \bigoplus_{\beta \neq 0} E_{i\beta} \right)}_{\text{pure imaginary}}.
\end{equation}
In the following, with the term ``pure imaginary eigenvalue'' we understand all the eigenvalues $\lambda = i\beta$, with $\beta \neq 0$.
\begin{lem}\label{lem_lagrangian_invariant}
There exists an $\vec{H}$-invariant, Lagrangian subspace $\Gamma_+$ of the symplectic space $\displaystyle{\bigoplus_{\substack{\lambda \text{ non pure} \\ \text{imaginary}}}E_\lambda}$.
\end{lem}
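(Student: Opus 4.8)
The plan is to split the symplectic space $W:=\bigoplus_{\lambda\text{ non pure imaginary}}E_\lambda$ into its two naturally $\J$-orthogonal symplectic pieces and to construct an $\vec{H}$-invariant Lagrangian subspace of each. The first piece is the generalized kernel $E_0$; the second is $W':=\bigoplus_{\mathrm{Re}\,\lambda\neq0}E_\lambda$, the sum over the eigenvalues with nonzero real part. By Lemma~\ref{lem_H_orthog} one has $E_0\,\J\,W'=0$, since for $\lambda=0$ and any $\lambda'$ with $\mathrm{Re}\,\lambda'\neq0$ neither $\lambda+\lambda'=\lambda'$ nor $\bar\lambda+\lambda'=\lambda'$ vanishes. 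Hence $E_0$ and $W'$ are $\J$-orthogonal and each carries a nondegenerate restriction of $\omega$, so it suffices to produce an $\vec{H}$-invariant Lagrangian $\Gamma_0\subset E_0$ and $\Gamma'\subset W'$ and to set $\Gamma_+:=\Gamma_0\oplus\Gamma'$.

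For the piece $W'$ I would simply take $\Gamma':=\bigoplus_{\mathrm{Re}\,\lambda>0}E_\lambda$, which is $\vec{H}$-invariant by construction. It is isotropic: if $\mathrm{Re}\,\lambda>0$ and $\mathrm{Re}\,\lambda'>0$, then $\mathrm{Re}(\lambda+\lambda')>0$ and $\mathrm{Re}(\bar\lambda+\lambda')>0$, so both quantities are nonzero and Lemma~\ref{lem_H_orthog} yields $E_\lambda\,\J\,E_{\lambda'}=0$. It is moreover Lagrangian in $W'$: the spectral symmetry $\lambda\leftrightarrow-\lambda$ of the Hamiltonian matrix $\J\H$ gives $\dim E_\lambda=\dim E_{-\lambda}$, so the eigenvalues with positive real part account for exactly half of $\dim W'$.

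The core of the lemma is the generalized kernel $E_0$, where $N:=\vec{H}|_{E_0}$ is a \emph{nilpotent} operator lying in the symplectic Lie algebra $\mathfrak{sp}(E_0)$, i.e.\ $\omega(Nz,w)+\omega(z,Nw)=0$. Here the only real invariant lines available lie in $\ker N$, and the Jordan structure at $0$ may be arbitrary, so I would establish the existence of an $N$-invariant Lagrangian by induction on $\dim E_0$, using the reduction procedure of Sec.~\ref{s:red}. Choose $0\neq v\in\ker N$ (nonempty since $N$ is nilpotent); then $\Gamma:=\spn\{v\}$ is an isotropic, $N$-invariant line. Because $N\in\mathfrak{sp}(E_0)$ and $\Gamma$ is $N$-invariant, its skew-orthogonal $\Gamma^\angle$ is $N$-invariant as well, and $N$ descends to a nilpotent operator $\bar N\in\mathfrak{sp}(E_0^{\Gamma})$ on the reduced space $E_0^{\Gamma}=\Gamma^\angle/\Gamma$. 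By the inductive hypothesis $E_0^{\Gamma}$ admits a $\bar N$-invariant Lagrangian $\bar L$, and its preimage $\Gamma_0\subset\Gamma^\angle$ under the quotient map $\Gamma^\angle\to E_0^\Gamma$ is an $N$-invariant Lagrangian of $E_0$; invariance follows since $\Gamma$ is invariant, and the Lagrangian property from the dimension count $\dim\Gamma_0=\dim\bar L+1$ together with the fact that $\omega$ descends to the induced form $\bar\omega$ on $E_0^\Gamma$.

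The only genuinely delicate point is thus the nilpotent block $E_0$: one must guarantee an invariant Lagrangian for a nilpotent Hamiltonian operator with possibly odd Jordan blocks, which is exactly where the symplectic reduction and the induction are indispensable; everything else reduces to the spectral bookkeeping supplied by Lemma~\ref{lem_H_orthog} and the $\pm\lambda$ symmetry of $\J\H$. Setting $\Gamma_+:=\Gamma_0\oplus\Gamma'$ then yields the desired $\vec{H}$-invariant Lagrangian subspace of $W$.
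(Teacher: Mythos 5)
Your proposal is correct, and for the eigenvalues with nonzero real part it coincides with the paper's argument: both take $\Gamma'=\bigoplus_{\mathrm{Re}\,\lambda>0}E_\lambda$, with isotropy and half-dimensionality supplied by Lemma~\ref{lem_H_orthog} and the $\lambda\leftrightarrow-\lambda$ symmetry of the spectrum. The genuine difference is your treatment of $E_0$. The paper disposes of it in one parenthetical --- ``choose an isotropic $m$-dimensional subspace $\Gamma_0\subset E_0$ (which is indeed $\vec{H}$-invariant)'' --- which, read literally, is not true of an arbitrary isotropic $m$-plane (already for a single $2\times 2$ nilpotent Jordan block only the line $\ker\vec{H}|_{E_0}$ is invariant), so what is really being asserted is the \emph{existence} of an invariant Lagrangian in $E_0$, without proof. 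Your induction supplies exactly that: pick $v\in\ker N$, reduce by the invariant isotropic line $\spn\{v\}$, note that $N$ being in $\mathfrak{sp}$ makes $\Gamma^\angle$ invariant and descends to a nilpotent Hamiltonian operator on $\Gamma^\angle/\Gamma$, and lift an invariant Lagrangian from the quotient; the isotropy and dimension count for the preimage are as you state. This is a correct and self-contained argument for the one step the paper leaves implicit, and it is also the step where nilpotency is genuinely used (to guarantee a real invariant line); the rest of your proof is the same spectral bookkeeping as in the paper.
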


\begin{proof}
If zero is not an eigenvalue of $\vec{H}$, we take $\Gamma_+=\displaystyle{\bigoplus_{\alpha>0}}E_{\alpha+i\beta}$, which is $\vec{H}$-invariant by definition. If zero is an eigenvalue of $\vec{H}$, let us consider the corresponding invariant subspace $E_0$, with $\dim E_0=2m$. Choose an isotropic $m$-dimensional subspace $\Gamma_0\subset E_0$ (which is indeed $\vec{H}$-invariant). Hence $\Gamma_+=\Gamma_0\oplus\displaystyle{\bigoplus_{\substack{\alpha>0}}}E_{\alpha+i\beta}$ satisfies the required properties.
\end{proof}

%

\subsubsection{Diagonalizable case}

In this section, we assume $\vec{H}$ to be diagonalizable.

\begin{prop} \label{prop_pure_imaginary_spectrum} 
Let $\vec{H} \in \HH$. Suppose that $\vec{H}$ is diagonalizable and has a pure imaginary spectrum. Then the Jacobi curve $J(\cdot)$ has infinitely many conjugate times.
\end{prop}
\begin{proof}
If $\vec{H}$ has only pure imaginary eigenvalues, it is well known (see e.g. \cite[Appendix 6]{MathMethCM}) that there exists a symplectic change of coordinates such that the Hamiltonian can be written as 
\begin{equation}\label{eq:normalpureim}
H(p,x)=\frac{1}{2}\sum_{j=1}^n \omega_j(p_j^2+x_j^2),\qquad \omega_1\ge \omega_2\ge \dots \ge \omega_n. 
\end{equation}
Notice that the eigenvalues of $\vec{H}$ are $\pm i\omega_j$, $j=1,\dots,n$. The signs of the $\omega_j$ are precisely the signs of $H$ on the real eigenspaces $E_{i\omega_j}$. The following two lemmas are crucial.
\begin{lem}[Givental' \cite{Givental}]\label{lem_cond_omega_j+omega_{n-j+1}}
There exists a Lagrangian subspace $\Lambda\subset \R^{2n}$ such that $H\rvert_\Lambda >0$ if and only if $\omega_j+\omega_{n-j+1}>0$, $j=1,\dots,n$.
\end{lem}
\begin{lem}[Fa{\u\i}busovich \cite{ExUnRiccEq}]\label{lem_controllable_admits_positive_lag}
Under the controllability assumption (or, equivalently, the ampleness of the Jacobi curve), there exists a Lagrangian subspace $\Lambda \subset \R^{2n}$ such that $H\rvert_\Lambda>0$.
\end{lem}
Lemmas~\ref{lem_controllable_admits_positive_lag} and~\ref{lem_cond_omega_j+omega_{n-j+1}} imply the following inequality:
\begin{equation}\label{eq:inequality}
\sum_{j=1}^n \omega_j>0.
\end{equation}
Now, let us define a new curve $L(t):=P_t (L_0)$ in $\L(\R^{2n})$, where $L_0:=\{(p,0):p\in \mathbb{R}^n\}\subset \R^{2n}$, $L_0\in \L(\R^{2n})$.
\begin{rem}
Notice that, in order to bring the Hamiltonian to the normal form of Eq.~\eqref{eq:normalpureim}, we have done a symplectic change of basis. Thus, in general, $L_0 \neq \ve$.
\end{rem}
If we reorder coordinates in such a way that $(p,x) \mapsto (p_1,x_1,\ldots,p_n,x_n)$, we can write 
\begin{equation}
L(t)=\begin{pmatrix}r(t\omega_1) &  &  \\ & \ddots & \\ & & r(t\omega_n)\end{pmatrix}L_0,
\end{equation}
where $r(t\omega_j)$ is a rotation of angle $t\omega_j$ in the $2$-dimensional subspace $(p_j,x_j)$. Observe that, given $t>0$ we can choose $\varepsilon>0$ sufficiently small such that $L(\varepsilon)\cap L_0=L(t+\varepsilon)\cap L_0= 0$. Therefore the Maslov index $L_{[\varepsilon,t+\varepsilon]}\cdot \mathcal{M}_{L_0}$ is well defined, since the endpoints of the curve are transversal to the train. 
We employ the shorthand $L_{(0,t)}\cdot\mathcal{M}_{L_0} = L_{[\varepsilon,t+\varepsilon]}\cdot \mathcal{M}_{L_0}$, for any $\varepsilon$ sufficiently small, and similar notation is understood every time a small variation of the end-times is required.

We now prove that the index $L_{(0,+\infty)}\cdot\mathcal{M}_{L_0}$ is infinite. Intersections with the train occur at each half-rotation in each $2$-dimensional subspace $(p_j,x_j)$, with a sign given by the sign of $\omega_j$. Therefore, by a direct computation, we have
\begin{equation}
L_{(0,T)}\cdot\mathcal{M}_{L_0} = \sum_{j=1}^{n}\lfloor \frac{T\omega_j}{\pi}\rfloor > \sum_{j=1}^{n}\frac{T\omega_j}{\pi}-n.
\end{equation}
Inequality~\eqref{eq:inequality} implies that there are no compensations in the sum of the signs in the computation of the Maslov index. Indeed, let $N>0$ fixed. Since $\sum_{j=1}^n \omega_j>0$ we can take $T \ge \frac{(N+n)\pi}{\sum_{j=1}^{n}\omega_j}$ so that
\begin{equation}\label{ineq_conj_time}
L_{(0,T)}\cdot\mathcal{M}_{L_0} > N.
\end{equation}
This implies that the Maslov index of the curve $L(t) = P_t (L_0)$ with the train $\mathcal{M}_{L_0}$ grows to infinity for $T \to \infty$. On the other hand, the number of conjugate times (counted with multiplicity) is the Maslov index of the Jacobi curve $J(t) = P_t (\ve)$ with the train $\mathcal{M}_{\ve}$. Thus, by combining Proposition~\ref{p:change1} and~\ref{p:change2}, we obtain
\begin{equation}
\lvert J_{(0,T)}\cdot \mathcal{M}_{\ve} - L_{(0,T)}\cdot \mathcal{M}_{L_0} \rvert \le 2n.
\label{ineq_prop5&6}
\end{equation}
Therefore
\begin{equation}
J_{(0,T)}\cdot\mathcal{M}_\ve > \frac{\sum_{j=1}^n\omega_j}{\pi}T-3n.
\end{equation}
Thus $J(\cdot)$ has infinitely many conjugate times.
\end{proof}

As a corollary of the proof of Proposition~\ref{prop_pure_imaginary_spectrum}, we can give an estimate for the first conjugate time of a LQ optimal control problem.
\begin{cor}\label{c:estimate1}
Suppose the Hamiltonian can be written as $H(p,x)=\frac{1}{2}\sum_{j=1}^n \omega_j(p_j^2+x_j^2)$. Then, if $T \geq \frac{(N+3n-1)\pi}{\sum_{j=1}^n\omega_j}$ there are at least $N$ conjugate times (counted with multiplicity) in the interval $(0,T]$. In particular, the first conjugate time $\bar{t}$ satisfies $\bar{t}\le \frac{3n\pi}{\sum_{j=1}^{n}\omega_j}$.
\end{cor}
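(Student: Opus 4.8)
The plan is to read off the corollary as a quantitative version of the chain of inequalities already established in the proof of Proposition~\ref{prop_pure_imaginary_spectrum}, rather than to run any new geometric argument. Under the stated hypothesis the Hamiltonian is exactly in the normal form~\eqref{eq:normalpureim}, so the entire argument of that proof applies verbatim: combining the floor bound $L_{(0,T)}\cdot\mathcal{M}_{L_0} > \frac{1}{\pi}\sum_{j=1}^n T\omega_j - n$ with the comparison inequality $|J_{(0,T)}\cdot\mathcal{M}_\ve - L_{(0,T)}\cdot\mathcal{M}_{L_0}|\le 2n$ (Propositions~\ref{p:change1} and~\ref{p:change2}), one obtains the single key estimate
\begin{equation}
J_{(0,T)}\cdot\mathcal{M}_\ve > \frac{\sum_{j=1}^n\omega_j}{\pi}\,T - 3n .
\end{equation}
This is the only ingredient needed; the rest is elementary bookkeeping of constants.

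First I would recall, via the Remark following the computation of the Maslov index, that the number of conjugate times in $(0,T]$ counted with multiplicity equals $|J_{(0,T)}\cdot\mathcal{M}_\ve|$. Substituting the hypothesis $T \ge \frac{(N+3n-1)\pi}{\sum_{j=1}^n\omega_j}$ into the key estimate gives $\frac{1}{\pi}\sum_{j=1}^n\omega_j\,T \ge N+3n-1$, hence $J_{(0,T)}\cdot\mathcal{M}_\ve > N-1$. Since the Maslov index is an integer and $N-1$ is an integer, this strict real inequality upgrades to $J_{(0,T)}\cdot\mathcal{M}_\ve \ge N$; in particular the index is positive, so it coincides with its absolute value, and there are at least $N$ conjugate times in $(0,T]$. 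The only subtlety worth flagging is precisely this passage from the strict real inequality to the sharp integer bound $\ge N$, which is what produces the clean constant $N+3n-1$.

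Finally, the estimate on the first conjugate time is the special case $N=1$: the threshold becomes $T \ge \frac{3n\pi}{\sum_{j=1}^n\omega_j}$, which guarantees at least one conjugate time in $(0,T]$, whence $\bar t \le \frac{3n\pi}{\sum_{j=1}^n\omega_j}$. I do not expect a genuine obstacle here, since the entire content is already contained in the proof of Proposition~\ref{prop_pure_imaginary_spectrum} and the remaining work amounts to tracking constants and invoking integrality of the Maslov index; one should only take minor care that the recentering of endpoints hidden in the notation $J_{(0,T)}$ does not affect the count on the half-open interval $(0,T]$.
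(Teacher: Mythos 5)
Your proposal is correct and follows essentially the same route as the paper, which presents the corollary as a direct consequence of the inequality $J_{(0,T)}\cdot\mathcal{M}_\ve > \frac{\sum_{j=1}^n\omega_j}{\pi}T-3n$ established at the end of the proof of Proposition~\ref{prop_pure_imaginary_spectrum}. The substitution of the threshold value of $T$, the upgrade from the strict bound $>N-1$ to $\ge N$ via integrality of the Maslov index, and the specialization $N=1$ for the first conjugate time are exactly the intended bookkeeping.
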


Now we are ready to discuss the case in which both pure and non pure imaginary eigenvalues occur in the spectrum of $\vec{H}$.
\begin{prop}\label{prop_at_least_one_pure_imag} 
Let $\vec{H} \in \HH$. Assume that $\vec{H}$ is diagonalizable and has at least one pure imaginary eigenvalue. Then the associated Jacobi curve $J(\cdot)$ has infinitely many conjugate times.
\end{prop}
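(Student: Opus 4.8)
The plan is to \emph{reduce} the statement to the purely imaginary, diagonalizable case already settled in Proposition~\ref{prop_pure_imaginary_spectrum}. Using the $\J$-orthogonal decomposition established above, write $\Sigma = \Sigma_{\mathrm{np}}\oplus\Sigma_{\mathrm{pi}}$, where $\Sigma_{\mathrm{np}}:=\bigoplus_{\lambda\text{ non pure imaginary}}E_\lambda$ and $\Sigma_{\mathrm{pi}}:=\bigoplus_{\beta\neq 0}E_{i\beta}$ are $P_t$-invariant symplectic subspaces; by hypothesis $\Sigma_{\mathrm{pi}}\neq 0$, say $\dim\Sigma_{\mathrm{pi}}=2n'$, and on it $\vec H$ is diagonalizable with purely imaginary spectrum. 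Lemma~\ref{lem_lagrangian_invariant} furnishes an $\vec H$-invariant Lagrangian $\Gamma_+\subset\Sigma_{\mathrm{np}}$, which is isotropic in $\Sigma$ and satisfies $\Gamma_+^\angle=\Gamma_+\oplus\Sigma_{\mathrm{pi}}$, so the symplectic reduction is $\Sigma^{\Gamma_+}=\Gamma_+^\angle/\Gamma_+\cong\Sigma_{\mathrm{pi}}$, carrying the purely imaginary dynamics. I would then show that the conjugate times of the full system are, up to a bounded additive error, governed by this reduced system, to which Proposition~\ref{prop_pure_imaginary_spectrum} applies.

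Rather than projecting the time-varying Jacobi curve, I would exploit the soft comparison estimates of Propositions~\ref{p:change1} and~\ref{p:change2}, exactly as in the proof of Proposition~\ref{prop_pure_imaginary_spectrum}. Concretely, choose a Lagrangian $L_0=\Gamma_+\oplus L_{\mathrm{pi}}$ and a train $\mathcal{M}_\Pi$ with $\Pi=\Pi_{\mathrm{np}}\oplus\Pi_{\mathrm{pi}}$, where $L_{\mathrm{pi}},\Pi_{\mathrm{pi}}\in\L(\Sigma_{\mathrm{pi}})$ and $\Pi_{\mathrm{np}}\in\L(\Sigma_{\mathrm{np}})$ is transversal to $\Gamma_+$. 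Since $P_t$ fixes $\Gamma_+$ and preserves the splitting, $L(t):=P_tL_0=\Gamma_+\oplus P_tL_{\mathrm{pi}}$ is represented, in a Darboux chart subordinate to $\Pi$, by a block-diagonal symmetric matrix whose $\Sigma_{\mathrm{np}}$-block is \emph{constant}; hence that block contributes a fixed index that cancels in the coordinate formula $J_{[t_0,t_1]}\cdot\mathcal{M}_\Pi=\ind S(t_0)-\ind S(t_1)$, and $L_{(0,T)}\cdot\mathcal{M}_\Pi$ reduces to the Maslov index of the purely imaginary rotation curve $P_tL_{\mathrm{pi}}$ with respect to $\mathcal{M}_{\Pi_{\mathrm{pi}}}$ inside $\L(\Sigma_{\mathrm{pi}})$. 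Applying Proposition~\ref{prop_pure_imaginary_spectrum} to the reduced system shows this grows to $+\infty$; combining Propositions~\ref{p:change1} and~\ref{p:change2} yields $\lvert J_{(0,T)}\cdot\mathcal{M}_\ve-L_{(0,T)}\cdot\mathcal{M}_\Pi\rvert\le 2n$, whence the number of conjugate times $J_{(0,T)}\cdot\mathcal{M}_\ve\to\infty$.

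I expect the \textbf{transversality} to be the crux. To run the comparison against $\mathcal{M}_\ve$ I need endpoints transversal to $\ve$, which fails identically if $\ve\cap\Gamma_+\neq 0$ (then $\Gamma_+\subset L(t)$ would trap $L(t)$ inside $\mathcal{M}_\ve$ for every $t$). I would prove $\ve\cap\Gamma_+=0$ as follows: since $\Gamma_+$ is isotropic and $\vec H$-invariant, for $z\in\Gamma_+$ one has $2H(z)=\omega(z,\vec Hz)=0$, so $H|_{\Gamma_+}\equiv 0$; a nonzero $z_0=(p_0,0)\in\ve\cap\Gamma_+$ would then satisfy $H(z_0)=\tfrac12 p_0^*BB^*p_0=0$, i.e. $B^*p_0=0$, so a direct computation gives $\vec Hz_0=(-A^*p_0,0)\in\ve$; iterating, the entire $\vec H$-invariant subspace generated by $z_0$ lies in $\ve$, forcing $J(t)\cap\ve\supseteq P_t z_0\neq 0$ for all $t$ and contradicting the discreteness of conjugate times guaranteed by Lemma~\ref{l:ampletrasv} and Corollary~\ref{c:ampletrasv}. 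This also gives $J(t)\cap\Gamma_+=P_t(\ve\cap\Gamma_+)=0$ for all $t$, and it makes the set of Lagrangians $L_{\mathrm{pi}}$ with $L_0\cap\ve=0$ open and dense, so the endpoints can be chosen transversal by a small time-shift.

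The remaining point is that Proposition~\ref{prop_pure_imaginary_spectrum} requires the reduced purely imaginary system to be controllable, equivalently $\sum_{j}\omega_j>0$. I would transport positivity through the reduction: by Lemma~\ref{lem_controllable_admits_positive_lag} the full controllable system admits a Lagrangian $\Lambda\subset\Sigma$ with $H|_\Lambda>0$; since $H|_{\Gamma_+}\equiv 0$ and $\Gamma_+$ is isotropic and $\vec H$-invariant, $H$ descends to $\Sigma^{\Gamma_+}\cong\Sigma_{\mathrm{pi}}$ as $H|_{\Sigma_{\mathrm{pi}}}$, and the reduced Lagrangian $\Lambda^{\Gamma_+}=(\Lambda\cap\Gamma_+^\angle+\Gamma_+)/\Gamma_+$ inherits strict positivity (for a class $[z]\neq 0$ one picks the representative $z\in\Lambda\cap\Gamma_+^\angle$ with $z\notin\Gamma_+$, whence $H([z])=H(z)>0$). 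Lemma~\ref{lem_cond_omega_j+omega_{n-j+1}} then gives $\omega_j+\omega_{n'-j+1}>0$ for the frequencies of $\Sigma_{\mathrm{pi}}$, and summing yields $\sum_j\omega_j>0$. The delicate bookkeeping is thus concentrated in the two descent-of-structure steps — transversality of $\ve$ to $\Gamma_+$, and positivity of the reduced Lagrangian — after which the passage to infinitely many conjugate times is automatic from Proposition~\ref{prop_pure_imaginary_spectrum} together with the homotopy-invariance estimates.
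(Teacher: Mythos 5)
Your proposal is correct, and it follows the paper's overall strategy — reduce by the invariant isotropic subspace $\Gamma_+$ of Lemma~\ref{lem_lagrangian_invariant} to the purely imaginary block, then invoke the Maslov-index computation of Proposition~\ref{prop_pure_imaginary_spectrum} together with the comparison estimates of Propositions~\ref{p:change1} and~\ref{p:change2} — but it implements the reduction differently at two points. The paper projects the Jacobi curve itself: it notes that $J(t)\cap\Gamma_+=0$ for all $t$ (by $P_t$-invariance of $\Gamma_+$ plus Lemma~\ref{l:ampletrasv}), applies Lemma~\ref{l:ampleproj} to get a smooth, ample, monotone reduced curve $J^{\Gamma_+}(\cdot)$ in $\L(\Sigma^{\Gamma_+})$, applies Proposition~\ref{prop_pure_imaginary_spectrum} to that curve (ampleness of the reduced curve is exactly the controllability needed for Fa\u{\i}busovich's lemma downstairs, so $\sum_j\omega_j>0$ comes for free), and then pays a single Proposition~\ref{p:change1} correction to pass from the train $\mathcal{M}_{\ve^{\Gamma_+}\oplus\Gamma_+}$ back to $\mathcal{M}_\ve$. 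You never project the Jacobi curve: you build the comparison curve $L(t)=\Gamma_+\oplus P_tL_{\mathrm{pi}}$ upstairs, observe that its representing matrix is block-diagonal with constant non-imaginary block so that its Maslov index equals that of the rotation curve in $\Sigma_{\mathrm{pi}}$, and — since you no longer have ampleness of a reduced Jacobi curve to lean on — you transport the positive Lagrangian of Lemma~\ref{lem_controllable_admits_positive_lag} through the reduction by hand (using $H|_{\Gamma_+}\equiv 0$ and the vanishing of the cross terms, both consequences of isotropy plus invariance) before applying Givental's criterion to get $\sum_j\omega_j>0$. Both mechanisms are sound; yours trades Lemma~\ref{l:ampleproj} for this explicit positivity bookkeeping. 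Two small remarks: your verification that $\ve\cap\Gamma_+=0$ is correct (it is essentially the controllability obstruction $B^*(A^*)^jp_0=0$ in disguise, and is the $t=0$ instance of the transversality the paper needs), but it is only required if you route the comparison through $L\cdot\mathcal{M}_\ve$; the chain $J\cdot\mathcal{M}_\ve\to J\cdot\mathcal{M}_\Pi\to L\cdot\mathcal{M}_\Pi$ needs only generic endpoint choices. Also, what you invoke from Proposition~\ref{prop_pure_imaginary_spectrum} is really the computation in its proof (with $L_{\mathrm{pi}}=\Pi_{\mathrm{pi}}$ the $p$-plane of the normal form on $\Sigma_{\mathrm{pi}}$), not the proposition as stated, since the reduced system is not presented as a controllable pair $(H,\ve)$; with that reading the argument is complete.
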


\begin{proof}
We reduce the problem to the extremal case of Proposition~\ref{prop_pure_imaginary_spectrum}. Consider $\Gamma_+$ as in Lemma~\ref{lem_lagrangian_invariant}, and let $\dim\Gamma=k$ (we drop the index $+$ from now on). Recall that $\Gamma$ is an $\vec{H}$-invariant isotropic subspace of $\Sigma=\R^{2n}$. We will consider the Lagrange Grassmannian of the reduced space $\Sigma^\Gamma=\Gamma^\angle/\Gamma$. Notice that, by Lemma~\ref{l:ampletrasv}, the Jacobi curve remains transversal to $\Gamma$ for all times. Thus, by Lemma~\ref{l:ampleproj}, the reduced Jacobi curve $J^\Gamma(\cdot)$ is a smooth, ample, monotone curve in $\L(\Sigma^\Gamma)$. By construction, we have
\begin{equation}
\Gamma^\angle=\Gamma \oplus \displaystyle{\bigoplus_{\substack{\lambda \text{ pure} \\ \text{imaginary}}}E_\lambda}, \qquad \Sigma^\Gamma=\displaystyle{\bigoplus_{\substack{\lambda \text{ pure} \\ \text{imaginary}}}E_\lambda}.
\end{equation}
Therefore we reduced the problem to the case of purely imaginary spectrum, and we can apply Proposition~\ref{prop_pure_imaginary_spectrum} to conclude that $J^\Gamma(\cdot)$ has infinitely many conjugate times. Notice that conjugate times for $J^\Gamma(\cdot)$ are intersections with $\ve^\Gamma:=\pi(\ve) = (\Gamma^\angle \cap\ve) / \Gamma$. This means that the original curve has infinitely many intersections with $\ve^\Gamma\oplus \Gamma$. More precisely, as we obtained in the proof of Proposition~\ref{prop_pure_imaginary_spectrum}, and recalling that $\dim \Sigma^\Gamma = 2(n-k)$ we have
\begin{equation}
J_{(0,T)} \cdot\mathcal{M}_{\ve^\Gamma\oplus\Gamma} >\frac{\sum_{j=1}^{n-k}\omega_j}{\pi}T - 3(n-k).
\end{equation}
By applying again Proposition~\ref{p:change1}, we obtain
\begin{equation}
\lvert J_{(0,T)}\cdot \mathcal{M}_{\ve} - J_{(0,T)}\cdot \mathcal{M}_{\ve^\Gamma\oplus \Gamma} \rvert \le n.
\end{equation}
Therefore
\begin{equation}
J_{(0,T)} \cdot\mathcal{M}_{\ve} >\frac{\sum_{j=1}^{n-k}\omega_j}{\pi}T - 4n +3k.
\end{equation}
Then $J(\cdot)$ has infinitely many conjugate times as well.
\end{proof}

Again, we give an estimate for the number of conjugate times as a separate corollary.

\begin{cor}\label{c:estimate2}
Suppose the Hamiltonian, restricted to $\displaystyle{\bigoplus_{\substack{\lambda \text{ pure} \\ \text{imaginary}}}E_\lambda}$, can be written as $H(p,x)=\frac{1}{2}\sum_{j=1}^{n-k} \omega_j(p_j^2+x_j^2)$. Then if $T \geq \frac{(N+4n-3k-1)\pi}{\sum_{j=1}^{n-k}\omega_j}$, there are at least $N$ conjugate times (counted with multiplicity) in the interval $(0,T]$. In particular, the first conjugate time $\bar{t}$ satisfies $\bar{t}\le \frac{(4n-3k)\pi}{\sum_{j=1}^{n-k}\omega_j}$. 
\end{cor}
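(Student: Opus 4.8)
The plan is to read off this statement directly from the work already done in the proof of Proposition~\ref{prop_at_least_one_pure_imag}, in exact parallel to the way Corollary~\ref{c:estimate1} is extracted from the proof of Proposition~\ref{prop_pure_imaginary_spectrum}. The only genuinely new content is an integer-rounding argument, so I would keep the proof short and make the dependence on the earlier inequality explicit.

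First I would recall that, because the Jacobi curve $J(\cdot)$ is monotone and ample, the Remark following the computation of the Maslov index in charts guarantees that the number of conjugate times in $(0,T]$, counted with multiplicity, equals $\lvert J_{(0,T)}\cdot\mathcal{M}_\ve\rvert$. Monotonicity is what matters here: all intersections of the curve with the train $\mathcal{M}_\ve$ carry the same sign, so no cancellation occurs and the intersection number literally counts the crossings. In the regime of $T$ under consideration the index will be positive, so the absolute value may be dropped. This reduces the corollary to a lower bound on the integer $J_{(0,T)}\cdot\mathcal{M}_\ve$.

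Next I would invoke the final estimate produced in the proof of Proposition~\ref{prop_at_least_one_pure_imag}, namely
\begin{equation}
J_{(0,T)}\cdot\mathcal{M}_\ve > \frac{\sum_{j=1}^{n-k}\omega_j}{\pi}\,T - 4n + 3k,
\end{equation}
where positivity of $\sum_{j=1}^{n-k}\omega_j$ is the consequence of Lemmas~\ref{lem_cond_omega_j+omega_{n-j+1}} and~\ref{lem_controllable_admits_positive_lag} already recorded in the proof of Proposition~\ref{prop_pure_imaginary_spectrum}. Substituting the hypothesis $T \geq (N+4n-3k-1)\pi/\sum_{j=1}^{n-k}\omega_j$ yields $\tfrac{1}{\pi}\sum_{j=1}^{n-k}\omega_j\,T \geq N+4n-3k-1$, and hence $J_{(0,T)}\cdot\mathcal{M}_\ve > N-1$. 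Since the Maslov index is an integer, this forces $J_{(0,T)}\cdot\mathcal{M}_\ve \geq N$, which is precisely the claim that there are at least $N$ conjugate times, counted with multiplicity, in $(0,T]$. The statement on the first conjugate time is then the special case $N=1$: the threshold collapses to $T \geq (4n-3k)\pi/\sum_{j=1}^{n-k}\omega_j$, which guarantees a crossing in $(0,T]$ and therefore $\bar{t}\le (4n-3k)\pi/\sum_{j=1}^{n-k}\omega_j$.

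I do not expect a serious obstacle, as all the analytic work is inherited from the proposition. The two points requiring a little care are the identification of the conjugate-time count with the Maslov index (which rests on monotonicity, and which I would state rather than reprove) and the bookkeeping around the $\varepsilon$-shift convention hidden in the symbol $J_{(0,T)}$, so that the interval in the conclusion is honestly $(0,T]$ and the endpoint $t=0$ contributes nothing; discreteness of conjugate times from Lemma~\ref{l:ampletrasv} makes this legitimate. Beyond that, the proof is the arithmetic substitution above together with the integrality rounding $J_{(0,T)}\cdot\mathcal{M}_\ve > N-1 \Rightarrow J_{(0,T)}\cdot\mathcal{M}_\ve \geq N$.
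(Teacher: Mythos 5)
Your proposal is correct and follows exactly the route the paper intends: Corollary~\ref{c:estimate2} is presented as an immediate consequence of the inequality $J_{(0,T)}\cdot\mathcal{M}_\ve > \frac{\sum_{j=1}^{n-k}\omega_j}{\pi}T - 4n + 3k$ established at the end of the proof of Proposition~\ref{prop_at_least_one_pure_imag}, and your substitution of the threshold value of $T$ together with the integrality of the Maslov index (and the case $N=1$ for the first conjugate time) is precisely the intended argument, mirroring how Corollary~\ref{c:estimate1} is extracted from Proposition~\ref{prop_pure_imaginary_spectrum}.
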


\subsubsection{General case}

Now, let us consider an arbitrary $\vec{H}$. We approach the problem with the same basic techniques devised for the diagonalizable case. Let $\lambda=i\beta$, $\beta \neq 0$ a pure imaginary eigenvalue of $\vec{H}$. Recall that, by Lemma~\ref{lem_H_orthog}, $E_{i\beta}$ is $\J$-orthogonal to all the others $E_{\lambda'}$, with $\lambda' \neq \pm i \beta$. Therefore $E_{i\beta}$ is symplectic. It is well known that there exists a symplectic change of coordinates on $E_{i\beta}$ such that the Hamiltonian $H\rvert_{E_{i\beta}}$ has one of the following normal forms (see \cite{Ciampi,Williamson} and \cite[Appendix 6]{MathMethCM}).
\begin{enumerate}
\item[(a)] If $\pm i\beta$ correspond to a pair of Jordan blocks of even order $2k$:
 \begin{multline}\label{ham_normal_form_2k}
    H(p,x) = \pm\frac{1}{2}\left[\sum_{j=1}^k\left(\frac{1}{\beta^2} x_{2j-1}x_{2k-2j+1}+x_{2j}x_{2k-2j+2}\right) - \beta^2\sum_{j=1}^{k}p_{2j-1}x_{2j} + \sum_{j=1}^k p_{2j}x_{2j-1} - \right. \\
            - \left.\sum_{j=1}^{k-1}\left(\beta^2 p_{2j+1}p_{2k-2j+1}+p_{2j+2}p_{2k-2j+2}\right)\right].
\end{multline}
\item[(b)] If $\pm\lambda$ correspond to a pair of Jordan blocks of odd order $2k+1$:
\begin{multline}\label{ham_normal_form_2k+1}
    H(p,x) = \pm\frac{1}{2}\left[\sum_{j=1}^k\left(\beta^2 p_{2j}p_{2k-2j+2}+x_{2j}x_{2k-2j+2}\right)\right. -\sum_{j=1}^{2k}p_jx_{j+1} - \\
            - \left.\sum_{j=1}^{k+1}\left(\beta^2 p_{2j-1}p_{2k-2j+3}+x_{2j-1}x_{2k-2j+3}\right)\right].
\end{multline}
\end{enumerate}
Notice that the dimension of $E_\lambda$ is $4k$ or $4k+2$, respectively.

\begin{lem}\label{lem_Jordan_blocks_isotropic_subspace}
 Let $\lambda=i\beta$ a pure imaginary eigenvalue of $\vec{H}$. Thus
\begin{enumerate}
 \item[(a)] If the Jordan block corresponding to $\lambda$ has even order $2k$ then there exists a Lagrangian $\vec{H}$-invariant subspace $\Gamma\subset E_{\lambda}$ (of dimension $2k$).
 \item[(b)] If the Jordan block corresponding to $\lambda$ has odd order $2k+1$ then there exists an isotropic $\vec{H}$-invariant subspace $\Gamma\subset E_{\lambda}$ of dimension $2k$.
\end{enumerate}
\end{lem}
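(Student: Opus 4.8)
The plan is to work directly with a Jordan basis of $\vec{H}$ on the symplectic subspace $E_\lambda$ (recall $E_{i\beta}$ is symplectic by Lemma~\ref{lem_H_orthog}) and to read off the invariant subspace as the ``bottom half'' of the Jordan chains, using the Hamiltonian character of $\vec{H}$ to control the symplectic pairings. First I would fix a Jordan chain $v_1,\dots,v_m$ for $\lambda=i\beta$, normalized so that $\vec{H}v_1=\lambda v_1$ and $\vec{H}v_j=\lambda v_j+v_{j-1}$ for $2\le j\le m$, where $m=2k$ in case~(a) and $m=2k+1$ in case~(b); its complex conjugate $\bar v_1,\dots,\bar v_m$ is then a Jordan chain for $\bar\lambda=-i\beta$, and the two chains span the complexification of $E_\lambda$ (so $\dim_{\R}E_\lambda=2m$, i.e. $4k$ or $4k+2$). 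For every $1\le l\le m$ the complex span of $v_1,\dots,v_l$ is $\vec{H}$-invariant, since $\vec{H}v_j$ involves only $v_j$ and $v_{j-1}$, and likewise for the conjugate chain; hence
\begin{equation}
\Gamma_l := \spn_{\mathbb{C}}\{v_1,\dots,v_l\}\oplus\spn_{\mathbb{C}}\{\bar v_1,\dots,\bar v_l\},
\end{equation}
being conjugation-invariant, is the complexification of a real, $\vec{H}$-invariant subspace of $E_\lambda$ of real dimension $2l$.

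The crux is to determine for which $l$ the subspace $\Gamma_l$ is isotropic. Here I would exploit that $\vec{H}$ is infinitesimally symplectic, namely $\omega(\vec{H}u,w)+\omega(u,\vec{H}w)=0$ for all $u,w$ (equivalently, $\J\vec{H}=\H$ is symmetric). Writing $a_{ij}:=\omega(v_i,\bar v_j)$ and applying this identity to $v_i,\bar v_j$ gives, since $\lambda+\bar\lambda=0$, the recursion
\begin{equation}
a_{i-1,j}+a_{i,j-1}=0,\qquad 1\le i,j\le m,
\end{equation}
with the boundary convention $a_{0,j}=a_{i,0}=0$. Telescoping along anti-diagonals down to the boundary then yields $a_{ij}=0$ whenever $i+j\le m$. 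The same identity applied to two vectors of the chain for $\lambda$ (where the relevant factor is now $2\lambda\neq0$) shows, by induction on $i+j$, that $\omega(v_i,v_j)=0$ for all $i,j$, and symmetrically $\omega(\bar v_i,\bar v_j)=0$; thus the generalized eigenspaces of $\lambda$ and of $\bar\lambda$ are each isotropic.

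Combining these facts, $\Gamma_l$ is isotropic precisely when $a_{ij}=0$ for all $i,j\le l$, which is guaranteed as soon as $2l\le m$. In case~(a), $m=2k$, I take $l=k$: then $\Gamma_k$ is isotropic of dimension $2k=\tfrac12\dim E_\lambda$, hence Lagrangian. In case~(b), $m=2k+1$, the largest admissible value is again $l=k$, giving an $\vec{H}$-invariant isotropic subspace $\Gamma:=\Gamma_k$ of dimension $2k$, exactly as claimed.

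I expect the main obstacle to be the symplectic-pairing bookkeeping of the second step, i.e. establishing the vanishing of $\omega$ on a single Jordan block of a Hamiltonian matrix in the correct index range; once that is in place, invariance, reality, and the dimension count are immediate. As an alternative route one could instead read $\Gamma$ off directly as a coordinate subspace in the explicit normal forms~\eqref{ham_normal_form_2k} and~\eqref{ham_normal_form_2k+1} and verify invariance and isotropy by inspection, but the intrinsic Jordan-chain argument avoids the heavier computation and makes transparent why the odd case stops one dimension short of Lagrangian: a real $\vec{H}$-invariant subspace of $E_\lambda$ must pair its two conjugate chains symmetrically and therefore has even dimension, so no invariant Lagrangian of dimension $2k+1$ can exist.
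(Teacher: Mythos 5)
Your proof is correct, but it follows a genuinely different route from the paper's. The paper invokes the Williamson normal forms~\eqref{ham_normal_form_2k} and~\eqref{ham_normal_form_2k+1} for $H|_{E_\lambda}$, reads off the block structure of $\vec{H}|_{E_\lambda}$ in those coordinates, and exhibits $\Gamma$ as an explicit coordinate subspace whose invariance and isotropy are checked by inspection --- precisely the ``alternative route'' you mention at the end. You instead argue intrinsically on a Jordan chain: invariance of $\Gamma_l$ is immediate from the chain structure, and isotropy follows from the infinitesimally symplectic identity $\omega(\vec{H}u,w)+\omega(u,\vec{H}w)=0$, which gives the recursion $a_{i-1,j}+a_{i,j-1}=0$ because $\lambda+\bar{\lambda}=0$, hence $a_{ij}=(-1)^{j}a_{i+j,0}=0$ for $i+j\le m$, together with the vanishing of $\omega$ on each single chain by induction on $i+j$ because $2\lambda\neq 0$. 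The index bookkeeping checks out: for $l=k$ one has $i+j\le 2k\le m$ in both cases, so $\Gamma_k$ is isotropic of real dimension $2k$, and is Lagrangian in the symplectic subspace $E_\lambda$ exactly when $m=2k$. What the paper's computation buys is a set of explicit coordinates compatible with the normal forms it already relies on; what yours buys is independence from the normal-form classification (you only need a Jordan basis and the symmetry of $\J\vec{H}=-\H$), together with a transparent explanation of why the odd case stops one dimension short of Lagrangian. One shared caveat: like the paper, you implicitly assume a single Jordan block for $\lambda$, so that the chain and its conjugate span the complexification of $E_\lambda$; if several chains were present you would also need to kill the cross-pairings between distinct chains, which the same recursion does, but it deserves a sentence.
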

\begin{proof}
Let us consider the first case. As we recall above, $H\rvert_{E_\lambda}$ can be written as in Eq.~\eqref{ham_normal_form_2k}. Then, a careful inspection shows that $\vec{H}|_{E_\lambda} = -\J\H|_{E_\lambda}$ has the structure, in coordinates $(p,x) \in \R^{4k}$, displayed in Fig.~\ref{f:a}.
\begin{figure}
\centering
\subfigure[\label{f:a}]{\begin{tikzpicture}[x=0.30mm, y=0.30mm, inner xsep=0pt, inner ysep=0pt, outer xsep=0pt, outer ysep=0pt]
\path[line width=0mm] (-82.00,-82.00) rectangle +(164.00,164.00);
\definecolor{L}{rgb}{0,0,0}
\definecolor{F}{rgb}{0.827,0.827,0.827}
\path[line width=0.02mm, draw=L, fill=F] (-80.00,60.00) rectangle +(20.00,20.00);
\path[line width=0.02mm, draw=L, fill=F] (-60.00,40.00) rectangle +(20.00,20.00);
\path[line width=0.02mm, draw=L, fill=F] (-20.00,0.00) rectangle +(20.00,20.00);
\path[line width=0.02mm, draw=L, fill=F] (40.00,40.00) rectangle +(20.00,20.00);
\path[line width=0.02mm, draw=L, fill=F] (60.00,60.00) rectangle +(20.00,20.00);
\path[line width=0.02mm, draw=L, fill=F] (0.00,0.00) rectangle +(20.00,20.00);
\path[line width=0.02mm, draw=L, fill=F] (-20.00,-40.00) rectangle +(20.00,20.00);
\path[line width=0.02mm, draw=L, fill=F] (-60.00,-80.00) rectangle +(20.00,20.00);
\path[line width=0.02mm, draw=L, fill=F] (-40.00,20.00) rectangle +(20.00,20.00);
\path[line width=0.02mm, draw=L, fill=F] (20.00,20.00) rectangle +(20.00,20.00);
\path[line width=0.02mm, draw=L, fill=F] (-40.00,-60.00) rectangle +(20.00,20.00);
\draw(-70.00,68.00) node[anchor=base]{\fontsize{6}{7}\selectfont $1$};
\draw(-50.00,48.00) node[anchor=base]{\fontsize{6}{7}\selectfont $2$};
\draw(-10.00,7.00) node[anchor=base]{\fontsize{6}{7}\selectfont $k$};
\path[line width=0.02mm, draw=L, fill=F] (0.00,-20.00) rectangle +(20.00,20.00);
\path[line width=0.02mm, draw=L, fill=F] (20.00,-40.00) rectangle +(20.00,20.00);
\path[line width=0.02mm, draw=L, fill=F] (60.00,-80.00) rectangle +(20.00,20.00);
\path[line width=0.02mm, draw=L, fill=F] (40.00,-60.00) rectangle +(20.00,20.00);
\definecolor{F}{rgb}{0,0,0}
\path[line width=0.05mm, draw=L, fill=F] (-30.00,30.00) circle (0.09mm);
\path[line width=0.05mm, draw=L, fill=F] (-25.00,25.00) circle (0.09mm);
\path[line width=0.05mm, draw=L, fill=F] (30.00,30.00) circle (0.09mm);
\path[line width=0.05mm, draw=L, fill=F] (50.00,-50.00) circle (0.09mm);
\path[line width=0.05mm, draw=L, fill=F] (55.00,-55.00) circle (0.09mm);
\path[line width=0.05mm, draw=L, fill=F] (45.00,-45.00) circle (0.09mm);
\path[line width=0.05mm, draw=L, fill=F] (35.00,35.00) circle (0.09mm);
\path[line width=0.05mm, draw=L, fill=F] (25.00,25.00) circle (0.09mm);
\path[line width=0.05mm, draw=L, fill=F] (-35.00,35.00) circle (0.09mm);
\draw(-50.00,-72.00) node[anchor=base]{\fontsize{6}{7}\selectfont $k$-$1$};
\path[line width=0.05mm, draw=L, fill=F] (-30.00,-50.00) circle (0.09mm);
\path[line width=0.05mm, draw=L, fill=F] (-25.00,-45.00) circle (0.09mm);
\path[line width=0.05mm, draw=L, fill=F] (-35.00,-55.00) circle (0.09mm);
\path[line width=0.05mm, draw=L] (-80.00,-80.00) rectangle +(160.00,160.00);
\path[line width=0.05mm, draw=L] (-80.00,0.00) -- (80.00,0.00);
\path[line width=0.05mm, draw=L] (0.00,-80.00) -- (0.00,80.00);
\draw(10.00,7.00) node[anchor=base]{\fontsize{6}{7}\selectfont $k$};
\draw(50.00,48.00) node[anchor=base]{\fontsize{6}{7}\selectfont $2$};
\draw(70.00,68.00) node[anchor=base]{\fontsize{6}{7}\selectfont $1$};
\draw(-10.00,-32.00) node[anchor=base]{\fontsize{6}{7}\selectfont $1$};
\draw(10.00,-12.00) node[anchor=base]{\fontsize{6}{7}\selectfont $1$};
\draw(30.00,-32.00) node[anchor=base]{\fontsize{6}{7}\selectfont $2$};
\draw(70.00,-72.00) node[anchor=base]{\fontsize{6}{7}\selectfont $k$};
\end{tikzpicture}
\subfigure[\label{f:b}]{\begin{tikzpicture}[x=0.30mm, y=0.30mm, inner xsep=0pt, inner ysep=0pt, outer xsep=0pt, outer ysep=0pt]
\path[line width=0mm] (-82.00,-82.00) rectangle +(164.00,164.00);
\definecolor{L}{rgb}{0,0,0}
\definecolor{F}{rgb}{0.827,0.827,0.827}
\path[line width=0.02mm, draw=L, fill=F] (-40.00,20.00) [rotate around={270:(-40.00,20.00)}] rectangle +(20.00,20.00);
\path[line width=0.02mm, draw=L, fill=F] (40.00,60.00) [rotate around={270:(40.00,60.00)}] rectangle +(20.00,20.00);
\path[line width=0.02mm, draw=L, fill=F] (60.00,80.00) [rotate around={270:(60.00,80.00)}] rectangle +(20.00,20.00);
\path[line width=0.02mm, draw=L, fill=F] (0.00,20.00) [rotate around={270:(0.00,20.00)}] rectangle +(20.00,20.00);
\path[line width=0.02mm, draw=L, fill=F] (-60.00,40.00) [rotate around={270:(-60.00,40.00)}] rectangle +(20.00,20.00);
\path[line width=0.02mm, draw=L, fill=F] (20.00,40.00) [rotate around={270:(20.00,40.00)}] rectangle +(20.00,20.00);
\draw(-30.00,8.00) node[anchor=base]{\fontsize{6}{7}\selectfont $2k$};
\definecolor{F}{rgb}{0,0,0}
\path[line width=0.05mm, draw=L, fill=F] (-50.00,30.00) circle (0.09mm);
\path[line width=0.05mm, draw=L, fill=F] (-45.00,25.00) circle (0.09mm);
\path[line width=0.05mm, draw=L, fill=F] (30.00,30.00) circle (0.09mm);
\path[line width=0.05mm, draw=L, fill=F] (35.00,35.00) circle (0.09mm);
\path[line width=0.05mm, draw=L, fill=F] (25.00,25.00) circle (0.09mm);
\path[line width=0.05mm, draw=L, fill=F] (-56.00,36.00) circle (0.09mm);
\path[line width=0.05mm, draw=L] (-80.00,80.00) [rotate around={270:(-80.00,80.00)}] rectangle +(160.00,160.00);
\path[line width=0.05mm, draw=L] (-80.00,0.00) -- (80.00,0.00);
\path[line width=0.05mm, draw=L] (0.00,-80.00) -- (0.00,80.00);
\draw(10.00,8.00) node[anchor=base]{\fontsize{6}{7}\selectfont $2k$+$1$};
\draw(50.00,48.00) node[anchor=base]{\fontsize{6}{7}\selectfont $2$};
\draw(70.00,68.00) node[anchor=base]{\fontsize{6}{7}\selectfont $1$};
\definecolor{F}{rgb}{0.827,0.827,0.827}
\path[line width=0.02mm, draw=L, fill=F] (-80.00,-60.00) [rotate around={270:(-80.00,-60.00)}] rectangle +(20.00,20.00);
\path[line width=0.02mm, draw=L, fill=F] (-60.00,-40.00) [rotate around={270:(-60.00,-40.00)}] rectangle +(20.00,20.00);
\path[line width=0.02mm, draw=L, fill=F] (-20.00,0.00) [rotate around={270:(-20.00,0.00)}] rectangle +(20.00,20.00);
\path[line width=0.02mm, draw=L, fill=F] (-40.00,-20.00) [rotate around={270:(-40.00,-20.00)}] rectangle +(20.00,20.00);
\definecolor{F}{rgb}{0,0,0}
\path[line width=0.05mm, draw=L, fill=F] (-45.00,-45.00) circle (0.09mm);
\path[line width=0.05mm, draw=L, fill=F] (-50.00,-50.00) circle (0.09mm);
\path[line width=0.05mm, draw=L, fill=F] (-55.00,-55.00) circle (0.09mm);
\definecolor{F}{rgb}{0.827,0.827,0.827}
\path[line width=0.02mm, draw=L, fill=F] (-80.00,60.00) [rotate around={270:(-80.00,60.00)}] rectangle +(20.00,20.00);
\draw(-70.00,48.00) node[anchor=base]{\fontsize{6}{7}\selectfont $1$};
\path[line width=0.02mm, draw=L, fill=F] (60.00,-40.00) [rotate around={270:(60.00,-40.00)}] rectangle +(20.00,20.00);
\path[line width=0.02mm, draw=L, fill=F] (20.00,0.00) [rotate around={270:(20.00,0.00)}] rectangle +(20.00,20.00);
\path[line width=0.02mm, draw=L, fill=F] (40.00,-20.00) [rotate around={270:(40.00,-20.00)}] rectangle +(20.00,20.00);
\definecolor{F}{rgb}{0,0,0}
\path[line width=0.05mm, draw=L, fill=F] (44.00,-24.00) circle (0.09mm);
\path[line width=0.05mm, draw=L, fill=F] (50.00,-30.00) circle (0.09mm);
\path[line width=0.05mm, draw=L, fill=F] (55.00,-35.00) circle (0.09mm);
\draw(30.00,-12.00) node[anchor=base]{\fontsize{6}{7}\selectfont $1$};
\draw(70.00,-52.00) node[anchor=base]{\fontsize{6}{7}\selectfont $2k$};
\draw(-10.00,-12.00) node[anchor=base]{\fontsize{6}{7}\selectfont $1$};
\draw(-30.00,-32.00) node[anchor=base]{\fontsize{6}{7}\selectfont $2$};
\draw(-70.00,-72.00) node[anchor=base]{\fontsize{6}{7}\selectfont $2k$+$1$};
\end{tikzpicture}
\caption{Block structure of the normal form of $\vec{H}|_{E_\lambda}$ for a pair of Jordan blocks of even order (case a) and odd order (case b). In case (a), $\dim E_{\lambda} = 4k$, and each box denotes the presence of a non-vanishing $2\times 2$ block. In case (b), $\dim E_{\lambda} = 4k+2$, and each box denotes the presence of a non-vanishing $1\times 1$ block. All other entries are zero.}
\end{figure}
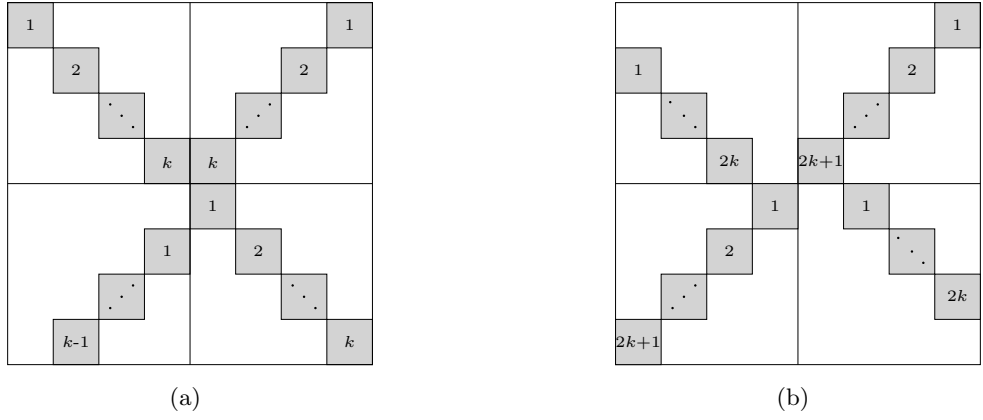
Notice that, for what follows, we do not need to know the explicit form of each box. If $k$ is even, we choose $\Gamma=\{(p,x)\in \mathbb{R}^{4k}|\, p_{k+1}=\ldots=p_{2k}=x_1=\ldots=x_{k}=0\}$ and if $k$ is odd we set $\Gamma=\{(p,x)\in \mathbb{R}^{4k}|\, p_{k+2}=\ldots=p_{2k}=x_1=\ldots=x_{k+1}=0\}$. It is a simple check that, in both cases, $\Gamma$ is a $2k$-dimensional $\vec{H}$-invariant space, which is also isotropic by construction, and thus Lagrangian (since $\dim E_\lambda = 4k$).

Now, suppose that the Jordan block corresponding to $\lambda$ has odd order $2k+1$. Thus $H|_{E_\lambda}$ can be written as in Eq.~\eqref{ham_normal_form_2k+1} and $\vec{H}\rvert_{E_\lambda} = -\J\H|_{E_\lambda}$ has the structure, in coordinates $(p,x) \in \R^{4k+2}$, displayed in Fig.~\ref{f:b}. Once again, we stress that we do not need the explicit form of each box. By choosing $\Gamma=\{(p,x)\in \mathbb{R}^{2n}|\, p_1=\ldots=p_{k+1}=x_{k+1}=\ldots=x_{2k+1}=0\}$, we get the required subspace.
\end{proof}

\begin{prop}\label{prop_oddjordan}
Let $\vec{H} \in \HH$. Suppose there exists at least one Jordan block of odd order corresponding to a pure imaginary eigenvalue of $\vec{H}$. Thus the Jacobi curve has infinitely many conjugate times.
\end{prop}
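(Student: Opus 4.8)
The plan is to mirror the proof of Proposition~\ref{prop_at_least_one_pure_imag}, reducing the general problem to the purely imaginary, diagonalizable situation of Proposition~\ref{prop_pure_imaginary_spectrum} by means of an $\vec{H}$-invariant isotropic subspace $\Gamma$. The guiding idea is to build $\Gamma$ so large that the reduced symplectic space $\Sigma^\Gamma = \Gamma^\angle/\Gamma$ retains exactly one two-dimensional piece for each odd-order Jordan block attached to a pure imaginary eigenvalue, while every even-order block and the entire non-pure-imaginary part collapse to zero under reduction.

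Concretely, I would start from the $\J$-orthogonal symplectic decomposition $\R^{2n} = \left(\bigoplus_{\lambda \text{ non pure im.}} E_\lambda\right) \oplus \left(\bigoplus_{\beta \neq 0} E_{i\beta}\right)$ and define $\Gamma$ piece by piece. On the non-pure-imaginary part I take the invariant Lagrangian $\Gamma_+$ of Lemma~\ref{lem_lagrangian_invariant}. On each $E_{i\beta}$ I use the symplectic normal form to split it into symplectically orthogonal Jordan-block summands and apply Lemma~\ref{lem_Jordan_blocks_isotropic_subspace}: on an even block of order $2k$ I include the full invariant Lagrangian (dimension $2k$, half of $\dim E_\lambda = 4k$), and on an odd block of order $2k+1$ I include the invariant isotropic subspace of dimension $2k$, leaving a $2$-dimensional symplectic complement inside $\Gamma^\angle$. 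Since each summand is isotropic and the summands live in $\J$-orthogonal subspaces, $\Gamma$ is isotropic and $\vec{H}$-invariant.

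Next I would identify the reduced field. Because $\Gamma \subset \Gamma^\angle$ are both invariant, $\vec{H}$ descends to $\Sigma^\Gamma$, and by the above construction $\Sigma^\Gamma$ is the direct sum of the $2$-dimensional pieces surviving from the odd blocks, one per odd-order Jordan block. On each such piece the reduced field is a real $2\times 2$ Hamiltonian matrix whose eigenvalues, being inherited from $E_{i\beta}$, lie in $\{i\beta, -i\beta\}$; since it has zero trace, its eigenvalues are exactly $\pm i\beta$, hence it is diagonalizable with nonzero pure imaginary spectrum. By the standing hypothesis at least one odd block exists, so $\Sigma^\Gamma \neq 0$ and the reduced field is diagonalizable with purely imaginary spectrum, which is precisely the hypothesis of Proposition~\ref{prop_pure_imaginary_spectrum}.

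Finally I would run the transfer exactly as in Proposition~\ref{prop_at_least_one_pure_imag}. Transversality $J(t)\cap\Gamma = 0$ for all $t$ follows from $J(t)\cap\Gamma = P_t(\ve\cap\Gamma)$ (a consequence of $P_t\Gamma = \Gamma$), which has constant dimension, together with Lemma~\ref{l:ampletrasv} applied to a Lagrangian extension of $\Gamma$ to force this dimension to vanish near $t=0$. Hence $J^\Gamma(\cdot) = \pi^\Gamma(J(\cdot))$ is well defined, and by Lemma~\ref{l:ampleproj} it is monotone and ample. Proposition~\ref{prop_pure_imaginary_spectrum} then yields infinitely many conjugate times for $J^\Gamma(\cdot)$, i.e. infinitely many intersections of $J(\cdot)$ with $\ve^\Gamma \oplus \Gamma$; Proposition~\ref{p:change1} bounds $\lvert J_{(0,T)}\cdot\mathcal{M}_\ve - J_{(0,T)}\cdot\mathcal{M}_{\ve^\Gamma\oplus\Gamma}\rvert$ by $n$, so the index with respect to $\mathcal{M}_\ve$ still diverges and $J(\cdot)$ has infinitely many conjugate times. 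I expect the main obstacle to be the bookkeeping showing that $\Gamma$ is simultaneously isotropic and invariant, in particular justifying the symplectically orthogonal splitting of each $E_{i\beta}$ into individual Jordan blocks, together with the verification that the surviving $2$-dimensional reduced pieces genuinely carry a nondegenerate rotation (nonzero pure imaginary spectrum) so that Proposition~\ref{prop_pure_imaginary_spectrum} indeed applies.
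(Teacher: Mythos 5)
Your proposal is correct and follows essentially the same route as the paper: the same reduction by $\Gamma=\Gamma_+\oplus\Gamma_1\oplus\dots\oplus\Gamma_m$ built from Lemmas~\ref{lem_lagrangian_invariant} and~\ref{lem_Jordan_blocks_isotropic_subspace}, leaving a reduced diagonalizable Hamiltonian with nonempty pure imaginary spectrum to which Proposition~\ref{prop_pure_imaginary_spectrum} applies via Lemma~\ref{l:ampleproj}. Your additional care about transversality of $J(\cdot)$ to $\Gamma$ and about the spectrum of the surviving $2$-dimensional pieces only makes explicit what the paper leaves implicit.
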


\begin{proof}
We will reduce the problem to the diagonalizable case by studying the curve in a reduced space $\Sigma^\Gamma=\Gamma^\angle/\Gamma$. Let $\pm\lambda_1,\dots, \pm\lambda_m$ be the pure imaginary eigenvalues of $\vec{H}$ and let us consider, for each $i$, the quotient spaces $E_{\lambda_i}^{\Gamma_i}:=E_{\lambda_i}\cap\Gamma_i^\angle / \Gamma_i$, where the subspaces $\Gamma_i\subset E_{\lambda_i}$ are as in Lemma \ref{lem_Jordan_blocks_isotropic_subspace}. Notice that $\dim E_{\lambda_i}^{\Gamma_i}=0$ or $2$ depending on whether the Jordan block corresponding to $\lambda_i$ is even or odd, respectively. Now set $\Gamma=\Gamma_1\oplus\dots\oplus\Gamma_m\oplus\Gamma_+ $, where $\Gamma_+$ as in Lemma~\ref{lem_lagrangian_invariant}. Hence $\Sigma^\Gamma=E_{\lambda_1}^{\Gamma_1}\oplus\dots \oplus E_{\lambda_m}^{\Gamma_m}$, so if there is at least one $\lambda_i$ for which the corresponding Jordan block has odd order then $\vec{H}\rvert_{\Sigma^\Gamma}$ has nonempty pure imaginary spectrum and it is diagonalizable. Moreover, since the original Jacobi curve is ample and monotone, the reduced Jacobi curve $J^\Gamma(\cdot)$ is ample and monotone too by Lemma~\ref{l:ampleproj}. Thus the result follows from Proposition \ref{prop_pure_imaginary_spectrum}.
\end{proof} 

\begin{prop}\label{prop_noconjtimes}
Let $\vec{H} \in \HH$. If all Jordan blocks of $\vec{H}$ corresponding to pure imaginary eigenvalues are of even order, the Jacobi curve has no conjugate times.
\end{prop}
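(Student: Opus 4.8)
The plan is to reduce the statement to Proposition~\ref{lem_lagrangian_invariant_no_conj_points}: it suffices to exhibit a single $\vec{H}$-invariant Lagrangian subspace $\Gamma \subset \Sigma = \R^{2n}$. Indeed, once such a $\Gamma$ is found, Lemma~\ref{l:ampletrasv} guarantees that the monotone, ample Jacobi curve stays transversal to $\Gamma$ for $t \neq 0$, and Corollary~\ref{c:ampletrasv} then forbids any further intersection with $\ve = J(0)$, so there are no conjugate times. The entire burden of the proof is therefore the construction of $\Gamma$.

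I would assemble $\Gamma$ blockwise, using the $\J$-orthogonal symplectic decomposition already established,
\begin{equation}
\R^{2n} = \bigg( \underbrace{E_0 \oplus \bigoplus_{\alpha \neq 0} E_{\alpha+i\beta}}_{\text{non pure imaginary}} \bigg) \oplus \bigg( \bigoplus_{\beta > 0} E_{i\beta} \bigg),
\end{equation}
where the pure imaginary factors are indexed over distinct eigenvalue pairs $\pm i\beta$ (recall $E_{i\beta} = E_{-i\beta}$). On the non-pure-imaginary factor, Lemma~\ref{lem_lagrangian_invariant} already supplies an $\vec{H}$-invariant Lagrangian subspace $\Gamma_+$. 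On each pure imaginary factor $E_{i\beta}$, the hypothesis of the proposition forces the associated Jordan block to have even order, so case~(a) of Lemma~\ref{lem_Jordan_blocks_isotropic_subspace} provides an $\vec{H}$-invariant subspace $\Gamma_{i\beta} \subset E_{i\beta}$ that is Lagrangian inside $E_{i\beta}$.

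I would then set
\begin{equation}
\Gamma := \Gamma_+ \oplus \bigoplus_{\beta > 0} \Gamma_{i\beta}
\end{equation}
and check the three required properties. Invariance is immediate, since each summand is $\vec{H}$-invariant. Isotropy follows because the factors $E_\lambda$ are pairwise $\J$-orthogonal by Lemma~\ref{lem_H_orthog}, so every cross term $\omega(\Gamma_\lambda, \Gamma_{\lambda'})$ vanishes, while each summand is isotropic within its own factor. For the dimension, each block contributes exactly half of its factor, whence $\dim \Gamma = \tfrac{1}{2}\dim \R^{2n} = n$; thus $\Gamma$ is Lagrangian. Applying Proposition~\ref{lem_lagrangian_invariant_no_conj_points} concludes the argument.

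I expect the only delicate point to be the bookkeeping: one must ensure the pure imaginary eigenvalues are summed over distinct conjugate pairs so that no factor $E_{i\beta}$ is counted twice, and that the half-dimensions of $\Gamma_+$ and of the various $\Gamma_{i\beta}$ add up to precisely $n$. Everything else is a direct transcription of the invariant-Lagrangian criterion, and it is worth emphasizing the contrast with Proposition~\ref{prop_oddjordan}: there a single odd block obstructs the construction, yielding only an isotropic (not Lagrangian) invariant subspace and hence infinitely many conjugate times, whereas here the evenness of all pure imaginary blocks is exactly what allows each factor to carry a full Lagrangian.
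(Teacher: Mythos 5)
Your proposal is correct and coincides with the paper's own proof: both reduce to Proposition~\ref{lem_lagrangian_invariant_no_conj_points} by assembling an $\vec{H}$-invariant Lagrangian subspace as the direct sum of $\Gamma_+$ from Lemma~\ref{lem_lagrangian_invariant} with the Lagrangian invariant subspaces $\Gamma_i \subset E_{\lambda_i}$ supplied by case~(a) of Lemma~\ref{lem_Jordan_blocks_isotropic_subspace}. Your explicit verification of isotropy and the dimension count is a welcome elaboration of details the paper leaves implicit, but the argument is the same.
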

\begin{rem}
This proposition applies, in particular, when there are no pure imaginary eigenvalues.
\end{rem}

\begin{proof}
By Lemma~\ref{lem_lagrangian_invariant_no_conj_points} it is enough to find an $\vec{H}$-invariant Lagrangian subspace $\Gamma\subset\Sigma$. Let $\pm\lambda_1,\dots, \pm\lambda_m$ be the pure imaginary eigenvalues of $\vec{H}$. By Lemma~\ref{lem_Jordan_blocks_isotropic_subspace} there exists a Lagrangian $\vec{H}$-invariant subspace $\Gamma_i\subset E_{\lambda_i}$ for each $i$. Set $\Gamma=\Gamma_1\oplus\dots\oplus\Gamma_m\oplus\Gamma_+ $, where $\Gamma_+$ is as in Lemma~\ref{lem_lagrangian_invariant}.
\end{proof}

\paragraph*{Acknowledgements.}
The first author has been supported by the grant of the Russian Federation for the state support of research, Agreement No 14 B25 31 0029. The second author has been supported by the European Research Council, ERC StG 2009 “GeCoMethods”, contract number 239748, by INdAM (GDRE CONEDP) and by the Institut Henri Poincar\'e, Paris, where part of this research has been carried out.

{\footnotesize
\bibliographystyle{abbrv}
\bibliography{../biblio/LQ-bib}
}

\end{document}